\numberwithin{equation}{section}
\newcommand{\Z}{\mathbb{Z}}
\newcommand{\CC}{\mathcal{C}}
\newcommand{\CD}{\mathcal{D}}
\newcommand{\CE}{\mathcal{E}}
\newcommand{\CM}{\mathcal{M}}
\newcommand{\CN}{\mathcal{N}}
\newcommand{\FZ}{\mathfrak{Z}}
 \DeclareMathOperator{\Hom}{Hom}
 \DeclareMathOperator{\Id}{Id}
 \DeclareMathOperator{\Fun}{Fun}
 \DeclareMathOperator{\LMod}{LMod}
 \DeclareMathOperator{\RMod}{RMod}
 \DeclareMathOperator{\BMod}{BMod}
\newcommand{\bk}{{\mathbf{k}}}
\newcommand{\one}{{\mathbf{1}}}
\newcommand{\op}{{\mathrm{op}}}
\newcommand{\rev}{{\mathrm{rev}}}
\newcommand\void[1]{}
\renewcommand{\odot}{\otimes}
\newtheorem{thm}{Theorem}[section]
\newtheorem{lem}[thm]{Lemma}
\newtheorem{prop}[thm]{Proposition}
\newtheorem{cor}[thm]{Corollary}
\newtheorem{prop-defn}[thm]{Proposition-Definition}
\theoremstyle{definition}
\newtheorem{defn}[thm]{Definition}
\newtheorem{exam}[thm]{Example}
\newtheorem{rem}[thm]{Remark}
\theoremstyle{remark}
\begin{document}

\title{Semisimple and separable algebras in multi-fusion categories}
\author{Liang Kong}
\address{Shenzhen Institute for Quantum Science and Engineering, and Department of Physics, Southern University of Science and Technology, Shenzhen 518055, China }
\email{kongl@sustech.edu.cn}
\author{Hao Zheng}
\address{Shenzhen Institute for Quantum Science and Engineering, and Department of Physics, Southern University of Science and Technology, Shenzhen 518055, China. \newline
\indent Department of Mathematics, Peking University, Beijing 100871, China}
\email{zhengh@sustech.edu.cn}
\maketitle

\begin{abstract}
We give a classification of semisimple and separable algebras in a multi-fusion category over an arbitrary field in analogy to Wedderben-Artin theorem in classical algebras. It turns out that, if the multi-fusion category admits a semisimple Drinfeld center, the only obstruction to the separability of a semisimple algebra arises from inseparable field extensions as in classical algebras. Among others, we show that a division algebra is separable if and only if it has a nonvanishing dimension.
\end{abstract}

\section{Introduction}

Fusion categories and their generalization, multi-fusion categories, have attracted a lot of attentions recently not only because of their beautiful theory (see \cite{ENO,EGNO} and references therein) but also because of their important applications in other areas such as topological field theory and condensed matter physics.

In this paper, we give a systematic study of two classes of very basic but very rich algebras in a multi-fusion category: semisimple algebras and separable algebras. First, we give a classification of semisimple algebras in terms of division algebras (Theorem \ref{thm:semisimple}) in the spirit of Wedderben-Artin theorem in classical algebras, as well as a classification of separable algebras (Theorem \ref{thm:separable}) together with several separability criteria.

By definition, a multi-fusion category is a rigid semisimple monoidal category. If we assume further that the multi-fusion category admits a semisimple Drinfeld center, then the theory becomes more consistent with classical algebras. It turns out that the only obstruction to the separability of a semisimple algebra arises from inseparable field extensions (Theorem \ref{thm:cen-nonper-sep}, Corollary \ref{cor:cen-per-sep}, Theorem \ref{thm:zero-sep}).
%This is also an application of the center. In fact, the center governs the separability of a semisimple algebra as in classical algebras.

%A consequence of our results, Corollary \ref{cor:fun-semi}, is worthy of mention. In the literature, a fusion category $\CC$ is usually assumed to be defined over an algebraically closed field of characteristic zero. This is not merely a condition for simplicity or interest, but is essential for closure of semisimplicity: $\Fun_\CC(\CM,\CN)$ is semisimple for any semisimple left $\CC$-modules $\CM,\CN$ \cite{ENO}. Corollary \ref{cor:fun-semi} says that one may relax the characteristic zero condition to a much weaker one, the semisimplicity of Drinfeld center. This allows one to generalize many important results on fusion categories to arbitrary characteristic.

Among others, we introduce the notion of the dimension of a division algebra (Definition \ref{def:quan-dim}), and show that a division algebra is separable if and only if it has a nonvanishing dimension (Theorem \ref{thm:div-sep-dim}). 
%As an application, we show that a fusion category over an algebraically closed field has a semisimple Drinfeld center if and only if it has a nonvanishing global dimension (Corollary \ref{cor:fus-semi-dim}). This generalizes a result due to \cite{Mu2,ENO,BV}.

\medskip

\noindent {\bf Acknowledgement.} 
%We would like to thank Christopher L. Douglas for helpful discussions. 
We are supported by the Science, Technology and Innovation Commission of Shenzhen Municipality (Grant Nos. ZDSYS20170303165926217 and JCYJ20170412152620376) and Guangdong Innovative and Entrepreneurial Research Team Program (Grant No. 2016ZT06D348), and by NSFC under Grant No. 11071134. LK is also supported by NSFC under Grant No. 11971219. HZ is also supported by NSFC under Grant No. 11131008 and 11871078.

%HZ is supported by NSFC under Grant No. 11131008.

\section{Multi-fusion categories}

In this section, we recall some basic facts about monoidal categories and multi-fusion categories. We refer readers to the book \cite{EGNO} for a general reference. We will follow the notations in \cite{KZ}.

\medskip

Given left modules $\CM,\CN$ over a monoidal category $\CC$, we use $\Fun_\CC(\CM,\CN)$ to denote the category of $\CC$-module functors $F:\CM\to\CN$ which {\em preserve finite colimits} throughout this paper. We remind readers that a functor between abelian categories preserve finite colimits if and only if it is right exact.

\begin{defn}
Let $\CC$ be a monoidal category. We say that an object $a\in\CC$ is {\em left dual} to an object $b\in\CC$ and $b$ is {\em right dual} to $a$, if there exist morphisms $u:\one\to b\otimes a$ and $v:a\otimes b\to\one$ such that the compositions
$$a\simeq a\otimes\one \xrightarrow{\Id_a\otimes u} a\otimes b\otimes a \xrightarrow{v\otimes\Id_a} \one\otimes a \simeq a$$
$$b\simeq \one\otimes b \xrightarrow{u\otimes\Id_b} b\otimes a\otimes b \xrightarrow{\Id_b\otimes v} b\otimes\one \simeq b$$
are identity morphisms. We also denote $a=b^L$ and $b=a^R$.
We say that $\CC$ is {\em rigid}, if every object has both a left dual and a right dual.
%We say that an object of $\CC$ is {\em dualizable}, if it has both a left dual and a right dual. We say that $\CC$ is {\em rigid}, if every object is dualizable.
\end{defn}

Let $A,B$ be algebras in a monoidal category $\CC$. Given a left $\CC$-module $\CM$, we use $\LMod_A(\CM)$ to denote the category of left $A$-modules in $\CM$. Given a right $\CC$-module $\CN$, we use $\RMod_B(\CN)$ to denote the category of right $B$-modules in $\CN$. We use $\BMod_{A|B}(\CC)$ to denote the category of $A$-$B$-bimodules in $\CC$. Note that $\LMod_A(\CC)$ is automatically a right $\CC$-module and that $\RMod_B(\CC)$ is a left $\CC$-module.

\begin{rem} \label{rem:lm-mod}
Let $A$ be an algebra in a rigid monoidal category $\CC$. Given a left $A$-module $x$, the action $A\otimes x\to x$ induces a morphism $x^L\otimes A\to x^L$ which endows $x^L$ with the structure of a right $A$-module. Therefore, the functor $x\mapsto x^L$ induces an equivalence $\LMod_A(\CC)\simeq\RMod_A(\CC)^\op$. In particular, $A^L$ defines a right $A$-module.
\end{rem}

\begin{defn}
Let $\CC$ be a monoidal category and $\CM$ a left $\CC$-module. Given objects $x,y\in\CM$, we define an object $[x,y]\in\CC$, if exists, by the mapping property
$$
\Hom_\CC(a,[x,y])\simeq\Hom_\CM(a\odot x,y),
$$
and refer to it as the {\em internal hom} between $x$ and $y$.
We say that $\CM$ is {\em enriched in $\CC$}, if $[x,y]$ exists for every pair of objects $x,y\in\CM$.
\end{defn}

\begin{rem} \label{rem:inhom}
It is well known that if $[x,x]$ exists for an object $x\in\CC$, then it defines an algebra in $\CC$.
If $\CC$ is rigid, we have a canonical isomorphism for $a,b\in \CC$, $x,y\in \CM$
$$a\otimes [x, y] \otimes b \simeq [b^R \otimes x, a\otimes y].$$
\end{rem}

\begin{rem} \label{rem:rigid-inhom}
Let $\CC$ be a rigid monoidal category that admits coequalizers, and let $A$ be an algebra in $\CC$. An easy computation shows that $[x,y]\simeq(x\otimes_A y^R)^L$ for $x,y\in\RMod_A(\CC)$. In particular, $[A,x]\simeq x$ and $[x,A^L]\simeq x^L$.
\end{rem}

\begin{rem} \label{rem:adjoint-al}
In the situation of Remark \ref{rem:rigid-inhom}, let $\CM=\RMod_A(\CC)$. Note that the forgetful functor 
%a typo: $[A,-]:\CC\to\CM$ 
$[A,-]:\CM\to\CC$
admits a left adjoint functor $-\otimes A$ as well as a right adjoint functor $-\otimes A^L$. In particular, we have a unit map $\Id_\CM \to [A,-]\otimes A^L$ and a counit map $[A,-]\otimes A \to \Id_\CM$.
%, where the counit map is defined by the right $A$-action: $x\otimes A \to x$ for $x\in \CM$.
\end{rem}

\begin{rem} \label{rem:division}
Let $A$ be an algebra in a monoidal category $\CC$. There is a canonical isomorphism of monoids $\Hom_\CC(\one,A) \simeq \Hom_{\RMod_A(\CC)}(A,A)$. This is a special case of the more general isomorphism $\Hom_\CC(x,y) \simeq \Hom_{\RMod_A(\CC)}(x\otimes A,y)$ for $x,y\in\RMod_A(\CC)$.
\end{rem}

Let $k$ be a field throughout this work. We denote by $\bk$ the symmetric monoidal category of finite-dimensional vector spaces over $k$.

\begin{defn}  \label{def:k-linear}
By a {\em finite category} over $k$ we mean a $\bk$-module $\CC$ that is equivalent to $\RMod_A(\bk)$ for some finite-dimensional $k$-algebra $A$;
we say that $\CC$ is {\em semisimple} if the algebra $A$ is semisimple.
%By a {\em $k$-linear functor} between finite categories over $k$ we mean a right exact $\bk$-module functor.
By a {\em $k$-bilinear functor} $F:\CC\times\CD\to\CE$, where $\CC,\CD,\CE$ are finite categories over $k$, we mean that $F$ is $k$-bilinear on morphism and right exact separately in each variable.
\end{defn}

\begin{rem}
Note that Deligne's tensor product $\CM\boxtimes\CN$ for finite categories $\CM,\CN$ over $k$ is the universal finite category which is equipped with a $k$-bilinear functor $\boxtimes:\CM\times\CN\to\CM\boxtimes\CN$. If $\CM\simeq\RMod_A(\bk)$ and $\CN\simeq\RMod_B(\bk)$, then $\CM\boxtimes\CN\simeq\RMod_{A\otimes B}(\bk)$.
\end{rem}

\begin{defn}  \label{def:finite-monoidal}
A {\em finite monoidal category} over $k$ is a monoidal category $\CC$ such that $\CC$ is a finite category over $k$ and that the tensor product $\otimes:\CC\times\CC\to\CC$ is $k$-bilinear.
We say that a nonzero finite monoidal category is {\em indecomposable} if it is not the direct sum of two nonzero finite monoidal categories.
A {\em multi-fusion category} is a rigid semisimple monoidal category.
A {\em fusion category} is a multi-fusion category $\CC$ with a simple tensor unit.
\end{defn}

\begin{rem} \label{rem:fus-one}
If $\CC$ is a monoidal category, then $\otimes:\Hom_\CC(\one,\one)\times\Hom_\CC(\one,\one)\to\Hom_\CC(\one,\one)$ is a homomorphism of monoids. As a consequence, the monoid $\Hom_\CC(\one,\one)$ is commutative. In particular, if $\CC$ is a fusion category then $\Hom_\CC(\one,\one)$ is a field.
\end{rem}

\begin{rem} \label{rem:mf-decomp}
Let $\CC$ be an indecomposable multi-fusion category, and let $\one=\bigoplus_i e_i$ be the decomposition of the tensor unit in terms of simple objects. 
%Then $e_i\simeq e_j$ if and only if $i=j$. Moreover, $e_i\otimes e_j\simeq 0$ if $i\neq j$. As a consequence, $\CC$ can be decomposed as 
Then $\CC\simeq\bigoplus_{i,j}\CC_{i j}$ where $\CC_{i j}=e_i\otimes\CC\otimes e_j$. Each of $\CC_{i i}$ is a fusion category and we have 
%$\CC_{i j}\otimes \CC_{j l}\subset\CC_{i l}$. Moreover, the Drinfeld center $\FZ(\CC)$ is canonically equivalent to $\FZ(\CC_{i i})$ for all $i$. See \cite{EGNO,KZ}.
$\FZ(\CC)\simeq\FZ(\CC_{i i})$ \cite[Theorem 2.5.1]{KZ}. In particular, $\Hom_{\FZ(\CC)}(\one,\one)$ is a subfield of $\Hom_{\CC_{i i}}(\one,\one)$.
\end{rem}

\begin{defn} \label{def:finite-module}
Let $\CC$ be a finite monoidal category over $k$. We say that a left $\CC$-module $\CM$ is {\em finite} if $\CM$ is a finite category over $k$ and the action $\odot:\CC\times\CM\to\CM$ is $k$-bilinear. We say that a nonzero finite left $\CC$-module is {\em indecomposable} if it is not the direct sum of two nonzero finite left $\CC$-modules.
The notions of a {\em finite right module} and a {\em finite bimodule} are defined similarly.
\end{defn}

\begin{rem}
The class of finite module categories behaves well under many categorical constructions. For example, if $\CM$ is a finite left module over a finite monoidal category $\CC$, then $\CM$ is enriched in $\CC$. If, in addition, $\CC$ is rigid, then $\CM\simeq\RMod_A(\CC)$ for some algebra $A$ in $\CC$. Conversely, $\RMod_A(\CC)$ is finite for any algebra $A$ in a finite monoidal category $\CC$. See \cite[Section 2.3]{KZ}.
\end{rem}

\begin{defn}
Let $\CC$ be a semisimple category over $k$ and let $k'/k$ be a finite extension. We say that the finite category $\CC\boxtimes\bk'$ over $k'$ is obtained by applying {\em base extension} on $\CC$.
\end{defn}

\begin{rem} \label{rem:sep-ext}
If $\CC$ is a multi-fusion category, then $\FZ(\CC\boxtimes\bk') \simeq \FZ(\CC)\boxtimes\bk'$ \cite[Proposition 2.4.7]{KZ}. If $A$ is an algebra in a semisimple monoidal category $\CC$, then $\RMod_{A\boxtimes k'}(\CC\boxtimes\bk') \simeq \RMod_A(\CC)\boxtimes\bk'$ and $\BMod_{A\boxtimes k'|A\boxtimes k'}(\CC\boxtimes\bk') \simeq \BMod_{A|A}(\CC)\boxtimes\bk'$ \cite[Proposition 2.3.12]{KZ}. That is, base extension is compatible with taking Drinfeld center and module category.
Moreover, $\CC\boxtimes\bk'$ remains semisimple for a semisimple category $\CC$ if $k'/k$ is a separable extension. Consequently, $\CC\boxtimes\bk'$ is a multi-fusion category over $k'$ if it is obtained by applying a separable base extension on a multi-fusion category $\CC$ over $k$.
%By choosing sufficiently large $k'$ we may assume the center of $\Hom_\CC(a,a)$ is purely inseparable extension of $k$ for each simple $a\in\CC$.
\end{rem}

The following theorem is an easy consequence of Barr-Beck theorem \cite[Theorem 2.1.7]{KZ} (see \cite{EGNO} for a similar result).

\begin{thm} \label{thm:reconst}
Let $\CC$ be a rigid monoidal category that admits coequalizers, and let $\CM$ be a left $\CC$-module that admits coequalizers. Then $\CM\simeq\RMod_A(\CC)$ for some algebra $A$ if and only if the following conditions are satisfied.
\begin{enumerate}
\item $\CM$ is enriched in $\CC$.
\item There is an object $P\in\CM$ such that the functor $[P,-]:\CM\to\CC$ is conservative and preserves coequalizers.
\end{enumerate}
In this case, the functor $[P,-]$ induces an equivalence $\CM\simeq\RMod_{[P,P]}(\CC)$.
%Moreover, the sufficiency still holds if we drop the rigidity of $\CC$ but require $[P,-\odot P]\simeq -\otimes[P,P]$.
\end{thm}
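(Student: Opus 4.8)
The plan is to exhibit $[P,-]$ as the right adjoint in an adjunction on $\CC$, identify the resulting monad with the one whose modules are $\RMod_{[P,P]}(\CC)$, and then quote Barr--Beck. For the substantive direction, assume (1) and (2) and fix $P$ as in (2). The defining mapping property of the internal hom, namely $\Hom_\CM(a\odot P,y)\simeq\Hom_\CC(a,[P,y])$ natural in $a$ and $y$, says exactly that $[P,-]\colon\CM\to\CC$ is right adjoint to $-\odot P\colon\CC\to\CM$. The induced monad on $\CC$ is $T=[P,(-)\odot P]$, and applying the canonical isomorphism of Remark~\ref{rem:inhom} with $b=\one$ and $x=y=P$ yields a natural isomorphism $[P,a\odot P]\simeq a\otimes[P,P]$. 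Thus, as an endofunctor, $T$ is right multiplication by the algebra $[P,P]$ (which is an algebra by Remark~\ref{rem:inhom}).

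The step I expect to require the most care is promoting this to an isomorphism of \emph{monads}: one must check that the multiplication $T\circ T\Rightarrow T$ induced by the counit of $(-\odot P)\dashv[P,-]$, together with the unit $\Id\Rightarrow T$, correspond under the above identification to the multiplication and unit of the algebra $[P,P]$. Granting this, an Eilenberg--Moore $T$-algebra is precisely an object $a$ equipped with a map $a\otimes[P,P]\to a$ satisfying associativity and unitality, i.e. a right $[P,P]$-module; hence the category of $T$-algebras is $\RMod_{[P,P]}(\CC)$, and the comparison functor is $[P,-]$ endowed with its canonical $[P,P]$-action. Now I would apply Barr--Beck \cite[Theorem~2.1.7]{KZ}: since $\CM$ admits coequalizers and $[P,-]$ is conservative and preserves coequalizers by hypothesis (2), the comparison functor is an equivalence $\CM\simeq\RMod_{[P,P]}(\CC)$. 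This simultaneously establishes the ``if'' direction and the final clause of the theorem.

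For the converse, suppose $\CM\simeq\RMod_A(\CC)$. Remark~\ref{rem:rigid-inhom} supplies the internal hom $[x,y]\simeq(x\otimes_A y^R)^L$, so $\CM$ is enriched in $\CC$ and (1) holds. Taking $P=A$, Remarks~\ref{rem:rigid-inhom} and~\ref{rem:adjoint-al} identify $[A,-]$ with the forgetful functor $\RMod_A(\CC)\to\CC$. This functor is conservative, and it preserves coequalizers because rigidity of $\CC$ makes $-\otimes A$ right exact, so that coequalizers in $\RMod_A(\CC)$ are computed underlying; hence (2) holds. Consistently, $[A,A]\simeq A$ recovers the given presentation, so the equivalence of the main direction reproduces $\RMod_A(\CC)$.

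I regard the monad-identification in the second paragraph as the only genuine obstacle; once the functorial isomorphism $T\simeq-\otimes[P,P]$ is upgraded to a monad isomorphism, everything else is a direct invocation of the stated form of Barr--Beck and of the earlier remarks on internal homs in rigid categories.
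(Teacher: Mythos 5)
Your proposal is correct and follows exactly the route the paper intends: the paper offers no written proof beyond the remark that the theorem is ``an easy consequence of Barr--Beck \cite[Theorem 2.1.7]{KZ}'', and your argument --- exhibiting $(-\odot P)\dashv[P,-]$, identifying the monad with $-\otimes[P,P]$ via Remark \ref{rem:inhom}, invoking Barr--Beck, and handling the converse via Remarks \ref{rem:rigid-inhom} and \ref{rem:adjoint-al} --- is precisely that argument spelled out. The monad-identification step you flag is indeed the only point requiring care, and it is a routine verification, so there is no gap.
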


\section{Semisimple algebras}

\begin{defn}
Let $A$ be an algebra in a semisimple monoidal category $\CC$. We say that $A$ is {\em semisimple} if $\RMod_A(\CC)$ is semisimple. We say that $A$ is {\em simple} if $\RMod_A(\CC)$ is an indecomposable semisimple left $\CC$-module. We say that $A$ is a {\em division algebra} if $A$ is a simple right $A$-module.
\end{defn}

\begin{rem}
In the special case $\CC=\bk$, an algebra $A$ in $\CC$ is simply a finite-dimensional algebra over $k$, and $A$ is a semisimple (resp. simple, division) algebra if and only if $A$ is an ordinary semisimple (resp. simple, division) algebra over $k$. Therefore, the above definition indeed generalizes corresponding notions in classical algebras. This notion of a semisimple algebra was introduced in \cite{KO}.
\end{rem}

\begin{rem}
In view of Remark \ref{rem:division}, if $A$ is a division algebra in a semisimple monoidal category, then $\Hom_\CC(\one,A)$ is an ordinary division algebra.
As pointed out in \cite{O}, it is unlikely that a division algebra in a multi-fusion category is semisimple, however we have no any counterexample.
\end{rem}

The following proposition is a well known result. It shows that there is a good supply of semisimple algebras.

\begin{prop} \label{prop:ind-mod}
Let $\CC$ be a multi-fusion category and let $\CM$ be a semisimple left $\CC$-module. Then the left $\CC$-module $\RMod_{[x,x]}(\CC)$ is equivalent to a direct summand of $\CM$ for every $x\in\CM$. In particular, $[x,x]$ is a semisimple algebra.
\end{prop}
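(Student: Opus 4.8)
The plan is to apply the reconstruction Theorem \ref{thm:reconst} with the chosen object $x$ in the role of $P$, after first cutting $\CM$ down to the $\CC$-submodule that $x$ generates. Concretely, let $\CN\subseteq\CM$ be the full subcategory whose objects are the direct summands of finite direct sums of objects of the form $a\odot x$ with $a\in\CC$. Since $b\odot(a\odot x)\simeq(b\otimes a)\odot x$, the subcategory $\CN$ is closed under the $\CC$-action, under direct sums and under direct summands, and it contains $x$; as a summand of the semisimple module $\CM$ it is again semisimple and in particular admits coequalizers. The internal hom $[x,y]$ computed in $\CN$ agrees with the one computed in $\CM$, because $\CN$ is a full subcategory closed under the action, so in what follows I will work entirely inside $\CN$.

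The first real point is that $\CN$ is a direct summand of $\CM$ as a $\CC$-module. Let $\CN'$ be the full subcategory spanned by the simple objects of $\CM$ that do not lie in $\CN$, so that $\CM=\CN\oplus\CN'$ as $k$-linear categories by semisimplicity; it remains to check that $\CN'$ is also a $\CC$-submodule. Here I would use rigidity: for dualizable $b$ the functor $b\odot-$ has left adjoint $b^L\odot-$, whence for simple $s,t$ one has $s\prec b\odot t$ if and only if $t\prec b^L\odot s$. If some simple $s\in\CN$ occurred as a summand of $b\odot t$ with $t$ a simple object of $\CN'$, this equivalence would force $t\prec b^L\odot s\in\CN$, i.e. $t\in\CN$, a contradiction. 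Hence $b\odot t\in\CN'$ for every simple $t\in\CN'$, so $\CN'$ is a submodule and $\CM=\CN\oplus\CN'$ as $\CC$-modules.

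It now suffices to verify the two hypotheses of Theorem \ref{thm:reconst} for the functor $[x,-]:\CN\to\CC$. The category $\CN$ is enriched in $\CC$, giving condition (1). For (2), the functor $[x,-]$ is right adjoint to $a\mapsto a\odot x$, hence additive and $k$-linear; since both $\CN$ and $\CC$ are semisimple, any additive functor between them automatically preserves coequalizers (a coequalizer in a semisimple category is a split quotient, which additive functors preserve). For conservativity it is enough, by additivity, to show $[x,y]\neq0$ for every nonzero $y\in\CN$. Picking a nonzero simple summand $y'$ of $y$, by construction $y'$ is a summand of some $a_0\odot x$, so $\Hom_\CM(a_0\odot x,y')\neq0$; the defining property $\Hom_\CC(a_0,[x,y'])\simeq\Hom_\CN(a_0\odot x,y')$ then gives $[x,y']\neq0$, and $[x,y']$ is a summand of $[x,y]$. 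Theorem \ref{thm:reconst} now yields an equivalence $\CN\simeq\RMod_{[x,x]}(\CC)$ of left $\CC$-modules induced by $[x,-]$. Combined with the previous paragraph this exhibits $\RMod_{[x,x]}(\CC)$ as a direct summand of $\CM$, and since $\CN$ is semisimple, so is $\RMod_{[x,x]}(\CC)$; that is, $[x,x]$ is a semisimple algebra.

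I expect the main obstacle to be the second paragraph: correctly identifying the summand $\CN$ generated by $x$ and proving that its complement $\CN'$ is again a $\CC$-submodule. Everything there hinges on the rigidity of $\CC$ through the adjunction $s\prec b\odot t\Leftrightarrow t\prec b^L\odot s$; the remaining verifications (preservation of coequalizers and conservativity) are soft consequences of semisimplicity together with the mapping property of the internal hom.
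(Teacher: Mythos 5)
Your proposal is correct and follows essentially the same route as the paper: both cut $\CM$ into the $\CC$-submodule generated by $x$ and a complementary submodule (you verify the complement is closed under the action via the adjunction $s\prec b\odot t\Leftrightarrow t\prec b^L\odot s$, the paper via $[x,a\odot y]\simeq a\otimes[x,y]$ --- the same use of rigidity), and then both apply Theorem \ref{thm:reconst} to $[x,-]$ on the generated summand, with exactness from semisimplicity and conservativity from the construction.
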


\begin{proof}
Let $\CM'\subset\CM$ be the full subcategory formed by the direct summands of $a\odot x$, $a\in\CC$. Clearly, $\CM'$ is a left $\CC$-module. Let $\CM''\subset\CM$ be the full subcategory form by those objects $y$ such that $\Hom_\CM(a\odot x,y)\simeq0$ for all $a\in\CC$, i.e. $[x,y]\simeq0$. Then $\CM''$ is also a left $\CC$-module, because $[x,a\odot y] \simeq a\otimes[x,y] \simeq 0$ for $a\in\CC$, $y\in\CM''$. By construction, $\CM\simeq\CM'\oplus\CM''$.

Consider the functor $[x,-]:\CM'\to\CC$. It is exact because $\CM'$ is semisimple. Moreover, $[x,y]\simeq0$ only if $y\simeq0$ for $y\in\CM'$, i.e. the functor $[x,-]$ is conservative.
Applying Theorem \ref{thm:reconst}, we obtain $\CM'\simeq\RMod_{[x,x]}(\CC)$.
\end{proof}

\begin{cor} \label{cor:ind-mod}
Let $\CC$ be a multi-fusion category and let $\CM$ be an indecomposable semisimple left $\CC$-module. Then $\RMod_{[x,x]}(\CC) \simeq \CM$ for every nonzero $x\in\CM$. %In particular, $[x,x]$ is a simple algebra.
\end{cor}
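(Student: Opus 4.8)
The plan is to feed the hypothesis straight into Proposition \ref{prop:ind-mod} and then let indecomposability do the rest. By that proposition, $\RMod_{[x,x]}(\CC)$ is equivalent to a direct summand of $\CM$, so as left $\CC$-modules we may write $\CM\simeq\RMod_{[x,x]}(\CC)\oplus\CN$ for some finite left $\CC$-module $\CN$. Since $\CM$ is indecomposable, it is in particular nonzero, and the only way it can split as a direct sum is if one of the two summands vanishes. So the entire argument reduces to deciding \emph{which} summand survives.

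The one point that genuinely needs checking is that $\RMod_{[x,x]}(\CC)$ is the nonzero summand whenever $x\neq0$. For this I would show $[x,x]\neq0$ directly from the defining mapping property of the internal hom: taking $a=\one$ gives $\Hom_\CC(\one,[x,x])\simeq\Hom_\CM(\one\odot x,x)\simeq\Hom_\CM(x,x)$, which contains $\Id_x\neq0$ because $x\neq0$. Hence $[x,x]$ is a nonzero object of $\CC$, so the module $\RMod_{[x,x]}(\CC)$ is nonzero. Indecomposability of $\CM$ then forces $\CN\simeq0$, and therefore $\CM\simeq\RMod_{[x,x]}(\CC)$.

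Equivalently, one can read off the conclusion from the internals of the proof of Proposition \ref{prop:ind-mod}: there $\CM\simeq\CM'\oplus\CM''$ with $\CM'$ the subcategory of direct summands of objects $a\odot x$ and $\CM'\simeq\RMod_{[x,x]}(\CC)$. Since $x\simeq\one\odot x$ lies in $\CM'$ and $x\neq0$, the summand $\CM'$ is nonzero, so indecomposability yields $\CM''\simeq0$ and $\CM\simeq\CM'\simeq\RMod_{[x,x]}(\CC)$. I do not expect any real obstacle here; the mathematical content is entirely contained in the proposition, and the corollary is a one-line consequence. The only subtlety worth flagging explicitly is ruling out the degenerate possibility $\RMod_{[x,x]}(\CC)\simeq0$, which is exactly where the hypothesis $x\neq0$ is used.
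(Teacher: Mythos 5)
Your argument is correct and is exactly the intended one: the paper states this corollary without proof, treating it as an immediate consequence of Proposition \ref{prop:ind-mod} together with indecomposability, and your verification that $[x,x]\neq0$ (equivalently that the summand $\CM'$ containing $x$ is nonzero) supplies precisely the one detail being left implicit.
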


\begin{lem} \label{lem:simple-matr}
Let $A$ be a semisimple algebra in a multi-fusion category $\CC$, and let $A\simeq\bigoplus_i x_i$ be the decomposition of the right $A$-module into simple ones. Define a binary relation such that $x_i\sim x_j$ if $[x_i,x_j]\not\simeq0$. We have the following assertions:
\begin{enumerate}
\item $A\simeq\bigoplus_{i,j}[x_i,x_j]$.
\item $\sim$ is an equivalence relation.
\end{enumerate}
Suppose there is a single equivalence class for the  relation $\sim$. Then we have:
\begin{enumerate}
\setcounter{enumi}{2}
\item the functor $[x_i,-]:\RMod_A(\CC)\to\RMod_{[x_i,x_i]}(\CC)$ is an equivalence.
\item $[x_i,x_i]$ is a simple division algebra.
\item $[x_i,x_j]$ is an invertible $[x_j,x_j]$-$[x_i,x_i]$-bimodule.
\item $[x_j,x_l]\otimes_{[x_j,x_j]}[x_i,x_j]\simeq[x_i,x_l]$ as $[x_l,x_l]$-$[x_i,x_i]$-bimodules.
\end{enumerate}
\end{lem}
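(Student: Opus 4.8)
The plan is to dispatch the two structural assertions (1)–(2) using only additivity of the internal hom and rigidity, and then to extract (3)–(6) from the reconstruction machinery of Proposition~\ref{prop:ind-mod}. For (1) I would start from $[A,A]\simeq A$ (Remark~\ref{rem:rigid-inhom}) and expand both slots: since $A\simeq\bigoplus_i x_i$ as a right $A$-module and $[-,-]$ is additive in each variable (it carries finite direct sums in the first, contravariant, slot and in the second slot to direct sums, by the defining adjunction together with semisimplicity), one gets $A\simeq[A,A]\simeq\bigoplus_{i,j}[x_i,x_j]$ in $\CC$. For (2), reflexivity is immediate: $[x_i,x_i]$ is a unital algebra (Remark~\ref{rem:inhom}), hence nonzero, because $\Hom_\CC(\one,[x_i,x_i])\simeq\Hom_{\RMod_A(\CC)}(x_i,x_i)\ni\Id_{x_i}$. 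The clean route to symmetry and transitivity is to reformulate the relation: writing $\CM:=\RMod_A(\CC)$, since $\CM$ is semisimple and the $x_i$ are simple, $[x_i,x_j]\neq0$ iff $\Hom_\CM(a\odot x_i,x_j)\neq0$ for some $a\in\CC$, iff $x_j$ is a direct summand of $a\odot x_i$ for some $a$. Symmetry then follows from the adjunction $\Hom_\CM(a\odot x_i,x_j)\simeq\Hom_\CM(x_i,a^R\odot x_j)$ (valid because $\CC$ is rigid), and transitivity follows from $b\odot(a\odot x_i)\simeq(b\otimes a)\odot x_i$, i.e. the class of summands of objects $a\odot x_i$ is closed under the action.

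Now assume a single $\sim$-class. For (3) I would invoke Proposition~\ref{prop:ind-mod}, which splits $\CM\simeq\CM'\oplus\CM''$ with $\CM'\simeq\RMod_{[x_i,x_i]}(\CC)$ via $[x_i,-]$ and $\CM''=\{y:[x_i,y]\simeq0\}$, and then show $\CM''=0$. Every $y\in\CM$ is a direct summand of the free module $[A,y]\otimes A\simeq\bigoplus_i([A,y]\odot x_i)$, so every simple object of $\CM$ is a summand of some $a\odot x_i$ and hence has $[x_i,y]\neq0$; with a single class this forces $\CM''=0$ and gives the equivalence $[x_i,-]:\CM\xrightarrow{\sim}\RMod_{[x_i,x_i]}(\CC)$. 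Assertion (4) is then read off from this equivalence: a single class means $\CM$ is indecomposable (simples in distinct indecomposable $\CC$-summands are never $\sim$-related), so $\RMod_{[x_i,x_i]}(\CC)\simeq\CM$ is indecomposable semisimple and $[x_i,x_i]$ is a simple algebra; and since $[x_i,-]$ sends the simple object $x_i$ to the regular module $[x_i,x_i]$, the latter is simple over itself, i.e. a division algebra.

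For (5)–(6) the strategy is Morita-theoretic. Writing $F_i:=[x_i,-]:\CM\xrightarrow{\sim}\RMod_{[x_i,x_i]}(\CC)$ for each $i$ (equivalences by (3)), each composite $F_iF_j^{-1}:\RMod_{[x_j,x_j]}(\CC)\to\RMod_{[x_i,x_i]}(\CC)$ is an equivalence of left $\CC$-modules, hence is given by relative tensoring with an invertible $[x_j,x_j]$-$[x_i,x_i]$-bimodule $P_{ij}$; evaluating on the regular module identifies $P_{ij}\simeq F_i(x_j)\simeq[x_i,x_j]$, which is exactly (5). The coherence relation $F_iF_j^{-1}\simeq(F_iF_l^{-1})\circ(F_lF_j^{-1})$ together with uniqueness of the representing bimodule then yields $[x_j,x_l]\otimes_{[x_j,x_j]}[x_i,x_j]\simeq[x_i,x_l]$, which is (6). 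I expect the main obstacle to be precisely this Morita step: one must confirm that an equivalence $\RMod_{B}(\CC)\simeq\RMod_{B'}(\CC)$ is represented by an invertible bimodule with composition of equivalences corresponding to $\otimes_B$, and — the genuinely fiddly part — that the bimodule structure on $[x_i,x_j]$ produced in this way agrees with the bimodule structure coming from the enriched composition $[x_j,x_j]\otimes[x_i,x_j]\otimes[x_i,x_i]\to[x_i,x_j]$ intended in the statement. Everything else (additivity, the adjunctions, and the splitting of Proposition~\ref{prop:ind-mod}) is formal.
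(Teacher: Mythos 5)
Your proposal is correct and follows essentially the same route as the paper's proof: (1) from $A\simeq[A,A]$ and additivity, (2) by recasting $x_i\sim x_j$ as ``$x_j$ is a summand of $a\odot x_i$'' and using rigidity for symmetry, (3)--(4) via the reconstruction statement (Proposition~\ref{prop:ind-mod}/Corollary~\ref{cor:ind-mod}), and (5)--(6) by tracking the regular module through the composite Morita equivalences with inverse $-\otimes_{[x_i,x_i]}x_i$. You fill in a few details the paper leaves implicit (notably why a single $\sim$-class forces $\CM''=0$, and why $[x_i,x_i]$ is a division algebra), but there is no substantive divergence.
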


\begin{proof}
$(1)$ follows immediate from the isomorphism $A\simeq[A,A]$.

$(2)$ Since $x_j$ is simple, $[x_i,x_j]\not\simeq0$ if and only if $x_j$ is a direct summand of $a\otimes x_i$ for some $a\in \CC$. The relation $\sim$ is clearly reflexive. If $x_i\sim x_j$, i.e. $x_j$ is a direct summand of some $a\otimes x_i$, then the embedding $x_j \to a\otimes x_i$ induces a nonzero map $a^L\otimes x_j \to x_i$ which has to be a quotient, thus $x_j\sim x_i$. This shows that the relation is symmetric. If $x_i\sim x_j$ and $x_j\sim x_l$, i.e. $x_j$ is a direct summand of some $a\otimes x_i$ and $x_l$ is a direct summand of some $b\otimes x_j$, then $x_l$ is a direct summand of $b\otimes a\otimes x_i$, i.e. $x_i\sim x_j$. This shows that the relation is transitive. Therefore, $\sim$ is an equivalence relation.

%$(2)$ By Lemma \ref{lem:proj-inj}(1), $a\otimes x_i$ is projective for all $a\in\CC$ and $x_i$. So, $[x_i,x_j]\not\simeq0$ if and only if $x_j$ is a direct summand of some $a\otimes x_i$. The relation $\sim$ is clearly reflexive. If $x_i\sim x_j$, i.e. $x_j$ is a direct summand of some $a\otimes x_i$, then $x_j$ is a direct summand of $a^R\otimes x_j$. i.e. $x_j\sim x_i$. This shows that the relation is symmetric. If $x_i\sim x_j$ and $x_j\sim x_l$, i.e. $x_j$ is a direct summand of some $a\otimes x_i$ and $x_l$ is a direct summand of some $b\otimes x_j$, then $x_l$ is a direct summand of $b\otimes a\otimes x_i$, i.e. $x_i\sim x_j$. This shows that the relation is transitive, hence is an equivalence relation. %This equivalence relation is transitive due to the simplicity of $A$. The claim follows.

%$(3)$ Let $\CM=\RMod_A(\CC)$. The functor $[x_i,-]:\CM\to\CC$ is exact because $\Hom_\CC(a,[x_i,-]) \simeq \Hom_\CM(a\otimes x_i,-)$ is exact for all $a\in \CC$. Since $[x_i,x_j]\not\simeq0$ for all $x_j$ by our assumption, $A$ is a direct summand of some $a\otimes x_i$. Therefore, every object of $\CM$ is a quotient of some $b\otimes a\otimes x_i$ by Lemma \ref{lem:proj-inj}(3). That is, $[x_i,y]\simeq0$ only if $y\simeq0$. This shows that the functor $[x_i,-]$ is conservative. Applying Theorem \ref{thm:reconst}, we conclude that the functor $[x_i,-]: \CM\to\RMod_{[x_i,x_i]}(\CC)$ is an equivalence.
$(3)$ Since there is a single equivalence class for $\sim$, $\RMod_A(\CC)$ is an indecomposable left $\CC$-module. Applying Corollary \ref{cor:ind-mod}, we obtain $\RMod_A(\CC)\simeq\RMod_{[x_i,x_i]}(\CC)$.

$(4)$ is a consequence of $(3)$.

$(5)$ According to $(3)$, $[x_i,x_i]$ and $[x_j,x_j]$ are Morita equivalent.
Note that the inverse of the functor $[x_i,-]$ is given by $-\otimes_{[x_i,x_i]}x_i$. Therefore, the composite equivalence $\RMod_{[x_j,x_j]}(\CC) \simeq \RMod_A(\CC) \simeq \RMod_{[x_i,x_i]}(\CC)$ carries $[x_j,x_j]$ to $[x_i,x_j]$. This implies that $[x_i,x_j]$ is an invertible bimodule.

$(6)$ The composite equivalence $\RMod_{[x_l,x_l]}(\CC)\simeq\RMod_{[x_j,x_j]}(\CC)\simeq\RMod_{[x_i,x_i]}(\CC)$ carries $[x_l,x_l]$ to $[x_j,x_l]\otimes_{[x_j,x_j]}[x_i,x_j]$, while the equivalence $\RMod_{[x_l,x_l]}(\CC)\simeq\RMod_{[x_i,x_i]}(\CC)$ carries $[x_l,x_l]$ to $[x_i, x_l]$.
\end{proof}

\begin{defn}
Let $A$ be an algebra in a semisimple monoidal category $\CC$. We say that $A$ is a {\em matrix algebra} if $A$ admits a decomposition $A\simeq\bigoplus_{i,j=1}^n A_{i j}$ ($n\ge1$) in $\CC$ such that
\begin{enumerate}
\item $A_{i i}$ are simple division algebras;
\item $A_{i j}$ is an invertible $A_{i i}$-$A_{j j}$-bimodule;
\item $A_{i j}\otimes_{A_{j j}}A_{j l} \simeq A_{i l}$ as $A_{i i}$-$A_{l l}$-bimodules;
\item the multiplication of $A$ is induced by the isomorphisms from $(3)$.
\end{enumerate}
\end{defn}

\begin{rem}
In the special case $\CC=\bk$, a division algebra is unique in its Morita class, so a matrix algebra in $\CC$ coincides with an ordinary matrix algebra.
\end{rem}

\begin{rem}
If $A$ is a matrix algebra, then $A$ is Morita equivalent to each of $A_{i i}$. In fact, $A \simeq (\bigoplus_j A_{j i}) \otimes_{A_{i i}} (\bigoplus_l A_{i l})$ by definition. Moreover, the isomorphism $A\simeq A\otimes_A A$ implies that $A_{i i} \simeq (\bigoplus_l A_{i l}) \otimes_A (\bigoplus_j A_{j i})$.
\end{rem}

\begin{thm} \label{thm:semisimple}
Let $A$ be an algebra in a multi-fusion category $\CC$.
$(1)$ $A$ is a simple algebra if and only if $A$ is a matrix algebra.
$(2)$ $A$ is a semisimple algebra if and only if $A$ is a direct sum of matrix algebras.
\end{thm}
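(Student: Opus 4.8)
The plan is to prove both directions of each equivalence, using Lemma~\ref{lem:simple-matr} as the workhorse for the forward implications and a direct module-theoretic computation for the converses. I would organize the argument so that part $(2)$ is reduced to part $(1)$, since a semisimple algebra decomposes as a product of simple ones along the equivalence classes of the relation $\sim$.

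For part $(1)$, forward direction, I would start from a simple algebra $A$, so that $\RMod_A(\CC)$ is indecomposable semisimple. Writing $A\simeq\bigoplus_i x_i$ as a sum of simple right $A$-modules, the relation $\sim$ of Lemma~\ref{lem:simple-matr} must have a single equivalence class: indeed, if there were two classes, the module category would split as a direct sum of $\CC$-modules (the objects generated by each class), contradicting indecomposability. Once we know there is a single class, assertions $(1)$, $(4)$, $(5)$, $(6)$ of Lemma~\ref{lem:simple-matr} give us precisely the four defining properties of a matrix algebra, with $A_{i j}:=[x_i,x_j]$. The only point requiring genuine care is matching the \emph{multiplication}: I must verify that the algebra structure on $A\simeq\bigoplus_{i,j}[x_i,x_j]$ is induced by the composition maps $[x_i,x_j]\otimes[x_j,x_l]\to[x_i,x_l]$ coming from the internal-hom structure, i.e. that these agree with the bimodule isomorphisms of Lemma~\ref{lem:simple-matr}$(6)$ under the identification $A\simeq[A,A]$.

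For part $(1)$, converse direction, I would assume $A\simeq\bigoplus_{i,j}A_{i j}$ is a matrix algebra and show $A$ is simple. The key observation is that $A$ is Morita equivalent to each $A_{i i}$, as recorded in the Remark following the definition of matrix algebra: concretely $\RMod_A(\CC)\simeq\RMod_{A_{i i}}(\CC)$ via the invertible-bimodule data, so it suffices to show $\RMod_{A_{i i}}(\CC)$ is indecomposable semisimple. Since $A_{i i}$ is a simple division algebra, $A_{i i}$ is itself a simple right $A_{i i}$-module, so $\RMod_{A_{i i}}(\CC)$ has (up to the $\CC$-action) a single simple generator; invoking Proposition~\ref{prop:ind-mod} and Corollary~\ref{cor:ind-mod} with $x=A_{i i}$ shows this module is indecomposable, and semisimplicity is inherited because $\CC$ is multi-fusion and $A_{i i}$ is a division algebra in it.

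For part $(2)$, forward direction, given a semisimple $A$ I decompose $A\simeq\bigoplus_i x_i$ and partition the simple summands into equivalence classes $C_1,\dots,C_r$ of $\sim$. Setting $A_{(s)}:=\bigoplus_{x_i,x_j\in C_s}[x_i,x_j]$, the relation $\sim$ restricted to each class has a single class, so Lemma~\ref{lem:simple-matr} applies within each block and each $A_{(s)}$ is a matrix algebra by part $(1)$. Because $[x_i,x_j]\simeq0$ whenever $x_i\not\sim x_j$, the decomposition $A\simeq\bigoplus_{i,j}[x_i,x_j]$ of Lemma~\ref{lem:simple-matr}$(1)$ collapses to $A\simeq\bigoplus_s A_{(s)}$, and the absence of cross terms forces this to be a direct sum \emph{as algebras}. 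The converse is immediate: if $A\simeq\bigoplus_s A_{(s)}$ with each $A_{(s)}$ a matrix algebra, then $\RMod_A(\CC)\simeq\bigoplus_s\RMod_{A_{(s)}}(\CC)$ is a finite direct sum of semisimple module categories by part $(1)$, hence semisimple. I expect the main obstacle to be the bookkeeping in the forward direction of $(1)$, namely checking that the composition maps on internal homs really do recover the matrix multiplication and the compatibility in condition $(4)$ of the matrix-algebra definition, rather than merely the abstract bimodule isomorphisms; this is where one must unwind the identification $A\simeq[A,A]$ and the associativity of internal-hom composition.
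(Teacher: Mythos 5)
Your proposal is correct and follows essentially the same route as the paper: the forward direction of $(1)$ is Lemma~\ref{lem:simple-matr} (with the single-equivalence-class observation for an indecomposable module category, which the paper leaves implicit), the converse is Morita equivalence with the simple algebras $A_{i i}$, and $(2)$ reduces to $(1)$ by splitting along the equivalence classes of $\sim$. One small caution in your converse of $(1)$: semisimplicity of $\RMod_{A_{i i}}(\CC)$ is not ``inherited because $A_{i i}$ is a division algebra'' (a division algebra in a multi-fusion category need not be semisimple); it holds simply because the definition of a matrix algebra stipulates that $A_{i i}$ is a \emph{simple} division algebra, so $\RMod_{A_{i i}}(\CC)$ is indecomposable semisimple by definition and Morita equivalence transfers this to $A$.
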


\begin{proof}
If $A$ is a simple algebra, then $A$ is matrix algebra by Lemma \ref{lem:simple-matr}. Conversely, if $A\simeq\bigoplus_{i,j=1}^n A_{i j}$ is a matrix algebra, then $A$ is Morita equivalent to each of the simple algebras $A_{i i}$ hence $A$ is simple. This proves $(1)$. $(2)$ is a consequence of $(1)$ and Lemma \ref{lem:simple-matr}.
\end{proof}

\begin{prop} \label{prop:div}
Let $A$ be a division algebra in a multi-fusion category $\CC$. Then $A\simeq A^L$ as right $A$-modules.
\end{prop}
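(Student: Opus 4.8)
The plan is to realize $A^L$ as a right $A$-module (Remark \ref{rem:lm-mod}), produce a nonzero morphism $A\to A^L$ of right $A$-modules, and then promote it to an isomorphism using the simplicity of $A$ together with the semisimplicity of $\CC$. The point to keep in mind throughout is that, although $A$ is a simple right $A$-module, we do not yet know that $A^L$ is simple, so we cannot invoke the usual ``a nonzero map between simple objects is an isomorphism''; the final upgrade will instead be carried out after passing to $\CC$ through the forgetful functor.

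First I would produce the morphism. By the adjunction $-\otimes A\dashv[A,-]$ of Remark \ref{rem:adjoint-al}, together with the fact that $[A,-]$ is the forgetful functor and that $A\simeq\one\otimes A$ is the free module on $\one$, there is a natural isomorphism $\Hom_{\RMod_A(\CC)}(A,A^L)\simeq\Hom_\CC(\one,A^L)$. To see the right-hand side is nonzero, dualize the algebra unit $u\colon\one\to A$: since taking left duals is a (faithful) anti-equivalence of $\CC$, the morphism $u^L\colon A^L\to\one^L\simeq\one$ is nonzero, so $\Hom_\CC(A^L,\one)\neq0$. As $\CC$ is semisimple with finite-dimensional hom spaces, $\dim_k\Hom_\CC(\one,A^L)=\dim_k\Hom_\CC(A^L,\one)>0$, and hence $\Hom_{\RMod_A(\CC)}(A,A^L)\neq0$. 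Fix a nonzero $\phi\colon A\to A^L$.

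Next I would upgrade $\phi$ to an isomorphism. Since $A$ is a division algebra, $A$ is a simple right $A$-module, so the nonzero map $\phi$ has trivial kernel and is therefore a monomorphism. Now apply the forgetful functor $U=[A,-]\colon\RMod_A(\CC)\to\CC$, which admits both adjoints (Remark \ref{rem:adjoint-al}) and is faithful; in particular it is exact and conservative, so $U(\phi)\colon A\to A^L$ is a monomorphism in $\CC$. Because $(-)^L$ is an anti-autoequivalence of the semisimple category $\CC$, it carries simple objects bijectively to simple objects, so $A$ and $A^L$ have the same (finite) length in $\CC$; a monomorphism between objects of equal finite length is an isomorphism, whence $U(\phi)$ is an isomorphism. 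Since $U$ is conservative, $\phi$ itself is an isomorphism, giving $A\simeq A^L$ as right $A$-modules.

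The main obstacle is the nonvanishing of $\Hom_{\RMod_A(\CC)}(A,A^L)$: the algebra structure only supplies a map $\one\to A$ in the ``wrong'' direction, and exhibiting a morphism $A\to A^L$ (equivalently, a nonzero $A\to\one$ in $\CC$) is precisely where semisimplicity of $\CC$ enters, via $\dim_k\Hom_\CC(X,Y)=\dim_k\Hom_\CC(Y,X)$. The mono-to-iso step is then soft, resting on the conservative forgetful functor and the length-preserving duality; its only delicate feature is that we deliberately avoid claiming $A^L$ is simple, which would otherwise require separately establishing that $A$ is simple as a left $A$-module.
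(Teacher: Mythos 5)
Your proof is correct and follows essentially the same route as the paper's: both produce a nonzero right-$A$-module map $A\to A^L$ from an adjunction for the forgetful functor together with semisimplicity of $\CC$ (the paper uses the right adjoint $-\otimes A^L$ to reduce to $\Hom_\CC(A,\one)\neq 0$, you use the free--forgetful adjunction to reduce to $\Hom_\CC(\one,A^L)\neq 0$ --- mirror images of the same computation), and then upgrade to an isomorphism using simplicity of $A$ as a right $A$-module plus the fact that $A$ and $A^L$ have the same length in $\CC$. Your care in not asserting that $A^L$ is simple is exactly the point the paper's terser final sentence is also relying on.
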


\begin{proof}
Let $\CM=\RMod_A(\CC)$. We have $\Hom_\CM(A,\one\otimes A^L) \simeq \Hom_\CC([A,A],\mathbf1) \not\simeq 0$ (c.f. Remark \ref{rem:adjoint-al}). So, there is a nonzero morphism of right $A$-modules $f:A\to A^L$. Since $A$ is a simple right $A$-module and since $A,A^L$ have the same length as objects of $\CC$, $f$ has to be an isomorphism.
\end{proof}

\section{Separable algebras}

The following definition is a straightforward generalization and has been extensively used in the literature.

\begin{defn}
Let $A$ be an algebra in a semisimple monoidal category $\CC$. We say that $A$ is {\em separable} if the multiplication $A\otimes A\to A$ splits as an $A$-$A$-bimodule map.
\end{defn}

\begin{prop} \label{prop:sep-semisim}
Let $A$ be a separable algebra in a semisimple monoidal category $\CC$. Then $\LMod_A(\CM)$ is semisimple for every semisimple left $\CC$-module $\CM$, and $\RMod_A(\CN)$ is semisimple for every semisimple right $\CC$-module $\CN$.
\end{prop}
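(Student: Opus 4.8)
The plan is to reduce everything to projectivity: since $\LMod_A(\CM)$ is a finite category over $k$, it is semisimple if and only if every one of its objects is projective (in a finite abelian category over $k$, every object being projective forces every short exact sequence to split, hence every subobject to be a direct summand). The assertion for $\RMod_A(\CN)$ is the mirror image of the one for $\LMod_A(\CM)$: reading the same argument in the reverse monoidal category $\CC^\rev$, in which right $\CC$-modules become left $\CC^\rev$-modules and the bimodule splitting of the multiplication persists (so $A$ stays separable), yields it at once. So I would concentrate on the left-module statement and aim to show that every object of $\LMod_A(\CM)$ is a retract of a free module, together with the fact that free modules are projective.

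First I would record that the forgetful functor $U\colon\LMod_A(\CM)\to\CM$ is exact and admits the free functor $F=A\otimes(-)$ as a left adjoint. Because $\CM$ is semisimple, every object of $\CM$ is projective; as the left adjoint of the exact functor $U$, the functor $F$ preserves projectives, so every free module $A\otimes y$ is projective in $\LMod_A(\CM)$. Thus the whole burden is to exhibit an arbitrary module as a retract of a free one, and this is exactly where separability must enter.

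Let $(x,\rho\colon A\otimes x\to x)$ be a left $A$-module. The action $\rho$ is the counit of the adjunction $F\dashv U$ at $x$, and the associativity axiom says precisely that $\rho$ is a morphism of left $A$-modules from the free module $A\otimes x$ onto $x$. Choosing an $A$-$A$-bimodule splitting $s\colon A\to A\otimes A$ of the multiplication $m$ (which exists by separability), I would define a candidate section $\sigma_x\colon x\to A\otimes x$ as the composite
\[
x\simeq\one\otimes x \xrightarrow{(s\circ u)\otimes\Id_x} A\otimes A\otimes x \xrightarrow{\Id_A\otimes\rho} A\otimes x,
\]
where $u\colon\one\to A$ is the unit. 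Using $m\circ s=\Id_A$, the unit axiom, and associativity of $\rho$, one checks $\rho\circ\sigma_x=\Id_x$, so $\sigma_x$ already splits $\rho$ in $\CM$. The step I expect to be the main obstacle is to verify that $\sigma_x$ is a morphism of \emph{left $A$-modules}, i.e. that $\sigma_x\circ\rho=(m\otimes\Id_x)\circ(\Id_A\otimes\sigma_x)$ as maps $A\otimes x\to A\otimes x$. In the classical case both composites unwind to $\sum_i e_i\otimes f_i(av)=\sum_i(ae_i)\otimes f_i v$ for the separability idempotent $\sum_i e_i\otimes f_i=s(1)$; categorically this is the commuting square expressing that $s$ respects both $A$-actions, so it is precisely the \emph{bimodule} (and not merely one-sided) nature of $s$ that is indispensable. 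The delicate point is to carry this out purely diagrammatically, as objects of $\CM$ have no elements; I would organize it as a chase through the naturality square of $\rho$ combined with the left- and right-linearity squares of $s$.

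Granting the $A$-linearity of $\sigma_x$, the pair $(\sigma_x,\rho)$ exhibits $x$ as a retract of the free, hence projective, module $A\otimes x$ inside $\LMod_A(\CM)$. Since a retract of a projective object is projective, every object of $\LMod_A(\CM)$ is projective, and therefore $\LMod_A(\CM)$ is semisimple. The symmetric construction, with $\sigma_x\colon x\to x\otimes A$ built from the same bimodule splitting $s$, disposes of $\RMod_A(\CN)$.
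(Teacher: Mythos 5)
Your proposal is correct and follows essentially the same route as the paper: free modules $A\odot x$ are projective because the forgetful functor is exact, and the separability splitting exhibits every module as a retract of a free one. The only difference is packaging: the paper writes your section $\sigma_x$ as $z \simeq A\odot_A z \xrightarrow{\iota\odot_A \Id_z} (A\otimes A)\odot_A z \simeq A\odot z$, i.e.\ as the image of the bimodule map $\iota$ under the functor $-\odot_A z$, which makes the left $A$-linearity you flag as the main obstacle automatic rather than a diagram chase.
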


\begin{proof}
The functor $x\mapsto A\odot x$ is left adjoint to the forgetful functor $\LMod_A(\CM)\to\CM$. Since $\Hom_{\LMod_A(\CM)}(A\odot x,-) \simeq \Hom_\CM(x,-)$ is exact, $A\odot x$ is a projective left $A$-module for $x\in\CM$. Let $\iota:A\to A\otimes A$ be an $A$-$A$-bimodule map which exhibits $A$ separable. Then for every left $A$-module $z\in\LMod_A(\CM)$, the map $z \simeq A\odot_A z \xrightarrow{\iota\odot_A\Id_z} (A\otimes A)\odot_A z \simeq A\odot z$ exhibits $z$ as a direct summand of the projective left $A$-module $A\odot z$. It follows that $\LMod_A(\CM)$ is semisimple. The proof for $\RMod_A(\CN)$ is similar.
\end{proof}

\begin{rem}
The separability of $A$ in Proposition \ref{prop:sep-semisim} is essential. For example, let $k'/k$ be an inseparable finite extension. Regard $k'$ as a semisimple algebra in $\bk$ and regard $\bk'$ as a semisimple left $\bk$-module. Then $\LMod_{k'}(\bk') \simeq \BMod_{k'|k'}(\bk)$ is not semisimple.
\end{rem}

\begin{cor}[\cite{O}] \label{cor:sep-semi}
Let $A$ be a separable algebras in a semisimple monoidal category $\CC$. Then
%$\RMod_A(\CC)$ is semisimple. In particular,
$A$ is semisimple.
\end{cor}

\begin{cor} \label{cor:sep-nece}
Let $A$ be a separable algebra in a semisimple monoidal category $\CC$ over $k$, and $\CM=\RMod_A(\CC)$. Suppose that $\Hom_\CC(a,a)$ is separable over $k$ for every $a\in\CC$. Then $\Hom_\CM(x,x)$ is separable over $k$ for every $x\in\CM$.
\end{cor}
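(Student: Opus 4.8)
The plan is to reduce the separability of $\Hom_\CM(x,x)$ to a semisimplicity statement after base extension, where the hypothesis on $\CC$ supplies exactly the control needed over inseparable extensions. I would start from the classical criterion that a finite-dimensional $k$-algebra $R$ is separable over $k$ if and only if $R\otimes_k k'$ is semisimple for every finite field extension $k'/k$ (for a semisimple but inseparable $R$ the obstruction is the non-reducedness of an inseparable center, which is already detected over a finite $k'$). Applied to $R=\Hom_\CM(x,x)$, it then suffices to prove that $\Hom_\CM(x,x)\otimes_k k'$ is semisimple for every finite $k'/k$.

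Next I would translate this into the categorical setting using Remark \ref{rem:sep-ext}. Writing $\CC'=\CC\boxtimes\bk'$, $A'=A\boxtimes k'$ and $x'=x\boxtimes k'$, flat base change identifies $\Hom_\CM(x,x)\otimes_k k'$ with $\Hom_{\CM\boxtimes\bk'}(x',x')$, while Remark \ref{rem:sep-ext} gives $\CM\boxtimes\bk'\simeq\RMod_{A'}(\CC')$. Thus it is enough to show that $\RMod_{A'}(\CC')$ is a semisimple category, for then all of its endomorphism algebras, in particular $\Hom_{\CM\boxtimes\bk'}(x',x')$, are semisimple.

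The semisimplicity of $\RMod_{A'}(\CC')$ I would obtain from Corollary \ref{cor:sep-semi} (via Proposition \ref{prop:sep-semisim}), once two points are verified. First, $A'$ is separable in $\CC'$: since base extension is monoidal, the $A$-$A$-bimodule splitting of $A\otimes A\to A$ base-extends to an $A'$-$A'$-bimodule splitting of $A'\otimes A'\to A'$, so separability is preserved automatically and uses no hypothesis. Second --- and this is the crucial step --- $\CC'$ must itself be semisimple. For a semisimple $\CC$ over $k$, the category $\CC\boxtimes\bk'$ is semisimple if and only if $\Hom_\CC(s,s)\otimes_k k'$ is semisimple for every simple object $s\in\CC$. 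By hypothesis each $\Hom_\CC(s,s)$ is a separable division algebra over $k$, so $\Hom_\CC(s,s)\otimes_k k'$ is semisimple for every extension $k'/k$, inseparable or not; hence $\CC'$ is semisimple. Applying Corollary \ref{cor:sep-semi} completes the reduction.

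The main obstacle I anticipate is precisely this interaction with inseparable extensions: Remark \ref{rem:sep-ext} guarantees that $\CC\boxtimes\bk'$ stays semisimple only for separable $k'/k$, and without a further assumption it can genuinely fail to be semisimple for inseparable $k'$, which is what breaks the naive argument. The separability hypothesis on the $\Hom_\CC(a,a)$ is exactly what repairs this, applied to the simple objects of $\CC$. The remaining ingredients --- the classical separability-via-finite-extensions criterion and the base-change compatibilities of $\Hom$, $\RMod$, and the separability splitting --- are routine given Remark \ref{rem:sep-ext}, so the heart of the proof is organizing them so that the endomorphism hypothesis keeps $\CC\boxtimes\bk'$ semisimple through every finite base change.
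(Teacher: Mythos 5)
Your proposal is correct and follows essentially the same route as the paper: reduce separability of $\Hom_\CM(x,x)$ to semisimplicity of $\Hom_\CM(x,x)\otimes_k k'$ for all finite $k'/k$, use the hypothesis on the $\Hom_\CC(a,a)$ to keep $\CC\boxtimes\bk'$ semisimple through arbitrary (possibly inseparable) base change, observe that $A\boxtimes k'$ remains separable, and conclude via Corollary \ref{cor:sep-semi}. Your identification of the inseparable-extension issue as the crux is exactly the point the paper's hypothesis is designed to address.
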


\begin{proof}
Since $\Hom_\CC(a,a)$ is separable over $k$, $\Hom_\CC(a,a)\otimes_k k'$ is semisimple for any finite extension $k'/k$. Therefore, $\CC\boxtimes \bk'$ is semisimple. Note that $A\boxtimes k'$ is a separable algebra in $\CC\boxtimes \bk'$. So, $\CM\boxtimes\bk' \simeq \RMod_{A\boxtimes k'}(\CC\boxtimes\bk')$ is semisimple by Corollary \ref{cor:sep-semi}. It follows that $\Hom_\CM(x,x)\otimes_k k'$ is a semisimple algebra for any finite extension $k'/k$. Therefore, $\Hom_\CM(x,x)$ is separable over $k$.
\end{proof}

\begin{prop}[\cite{DSS}] \label{prop:sep}
Let $A$ be a semisimple algebra in a semisimple monoidal category $\CC$, and $\CM=\RMod_A(\CC)$.
%Suppose that $\CM$ is semisimple.
The following conditions are equivalent:
\begin{enumerate}
\item The algebra $A$ is separable.
\item There is a separable algebra $B$ such that $\CM\simeq\RMod_B(\CC)$ as left $\CC$-modules.
\item For every semisimple left $\CC$-module $\CN$, the category $\Fun_\CC(\CM,\CN)$ is semisimple.
\item The category $\Fun_\CC(\CM,\CM)$ is semisimple.
\item The category $\BMod_{A|A}(\CC)$ is semisimple.
\end{enumerate}
\end{prop}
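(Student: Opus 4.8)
The plan is to prove the chain of equivalences by establishing a cycle, taking advantage of the fact that several of the intermediate categories are equivalent. First I would record the key structural observation underlying the whole proposition: for a semisimple algebra $A$ with $\CM=\RMod_A(\CC)$, there is a canonical equivalence $\Fun_\CC(\CM,\CM)\simeq\BMod_{A|A}(\CC)$, sending a $\CC$-module endofunctor to the $A$-$A$-bimodule obtained by evaluating on $A$ (equivalently, $[A,F(A)]$ via the internal hom). More generally, $\Fun_\CC(\CM,\CN)$ is equivalent to the category of $A$-$B$-bimodules when $\CN=\RMod_B(\CC)$. This identification immediately reduces several of the conditions to one another, so the real content is to connect separability of $A$ to the semisimplicity of these functor/bimodule categories.

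The implications I would organize as follows. The easiest direction is $(1)\Rightarrow(5)$: if $A$ is separable, then $A$ is a separable algebra in the semisimple monoidal category $\CC$, and $\BMod_{A|A}(\CC)=\RMod_{A\otimes A^{\mathrm{rev}}}(\CC\boxtimes\CC^{\mathrm{rev}})$ can be viewed as modules over the separable algebra $A\boxtimes A$ in the semisimple category $\CC\boxtimes\CC^{\mathrm{rev}}$ acting on the semisimple module $\CC$; then Proposition \ref{prop:sep-semisim} forces semisimplicity. (Alternatively, bimodules over a separable algebra inherit a splitting directly.) Then $(5)\Leftrightarrow(4)$ is immediate from the equivalence $\Fun_\CC(\CM,\CM)\simeq\BMod_{A|A}(\CC)$ noted above. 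The implication $(3)\Rightarrow(4)$ is trivial by taking $\CN=\CM$. For $(4)\Rightarrow(3)$, I would use that $\Fun_\CC(\CM,\CM)$ is the internal endomorphism algebra controlling $\CM$, so that any $\Fun_\CC(\CM,\CN)$ is a module over the semisimple category $\Fun_\CC(\CM,\CM)$, hence semisimple when the latter is; concretely $\Fun_\CC(\CM,\CN)\simeq\Fun_\CC(\CM,\CM)\boxtimes_{?}\CN$-type reasoning, realized via the bimodule description.

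For the circle-closing implications involving $(2)$, I would argue $(4)\Rightarrow(2)$ by producing an explicit separable $B$: when $\Fun_\CC(\CM,\CM)$ is semisimple, the identity functor $\Id_\CM$ lives in a semisimple category and the unit/counit maps from Remark \ref{rem:adjoint-al} give an idempotent-splitting that exhibits a separability structure; taking $B=[P,P]$ for a suitable progenerator $P$ (via Theorem \ref{thm:reconst}) realizes $\CM\simeq\RMod_B(\CC)$ with $B$ separable. The reverse $(2)\Rightarrow(4)$ is clear since both $(1)\Rightarrow(4)$ (through $(5)$) and the equivalence $\CM\simeq\RMod_B(\CC)$ show that $\Fun_\CC(\CM,\CM)$ depends only on $\CM$ as a $\CC$-module, and for the separable $B$ it is semisimple. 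Finally $(2)\Rightarrow(1)$: separability is a Morita-invariant-type property for the module $\CM$, so if some separable $B$ presents $\CM$, then $A$ must itself be separable; here I would transport the bimodule splitting for $B$ across the equivalence of bimodule categories $\BMod_{A|A}(\CC)\simeq\Fun_\CC(\CM,\CM)\simeq\BMod_{B|B}(\CC)$.

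The main obstacle I anticipate is the precise construction in $(4)\Rightarrow(1)$ (or $(4)\Rightarrow(2)$): extracting an honest bimodule splitting of the multiplication $A\otimes A\to A$ from the mere semisimplicity of $\Fun_\CC(\CM,\CM)$. Semisimplicity tells us the multiplication map, viewed appropriately as a morphism of objects in the semisimple bimodule category, admits a section as an abstract morphism, but one must check this section is genuinely an $A$-$A$-bimodule map realizing separability and is compatible with the algebra structure. The clean way to handle this is to phrase separability as the statement that $A$, regarded as an object of the semisimple category $\BMod_{A|A}(\CC)$, is a direct summand of $A\otimes A$ with the multiplication as the projection; in a semisimple category such a surjection of objects automatically splits, which is exactly what $(5)$ provides. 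Thus the whole argument hinges on translating separability into a splitting problem inside $\BMod_{A|A}(\CC)$ and invoking its semisimplicity, making the equivalence $\Fun_\CC(\CM,\CM)\simeq\BMod_{A|A}(\CC)$ the conceptual heart of the proof.
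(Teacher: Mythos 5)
Your overall strategy---a cycle of implications hinging on the equivalence $\Fun_\CC(\CM,\CM)\simeq\BMod_{A|A}(\CC)$, with Proposition \ref{prop:sep-semisim} supplying the semisimplicity statements and the splitting of $m:A\otimes A\to A$ inside the semisimple category $\BMod_{A|A}(\CC)$ recovering separability---is the same as the paper's. But there is one genuine gap: your argument for $(4)\Rightarrow(3)$. You claim that $\Fun_\CC(\CM,\CN)$ is semisimple because it is ``a module over the semisimple category $\Fun_\CC(\CM,\CM)$.'' Being a module category over a semisimple monoidal category does \emph{not} imply semisimplicity; that failure is exactly the gap between semisimple and separable algebras that this whole section is about (cf.\ the remark after Proposition \ref{prop:sep-semisim}: $\BMod_{k'|k'}(\bk)$ is a module over the semisimple $\bk$, yet is not semisimple when $k'/k$ is inseparable). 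Your fallback identification $\Fun_\CC(\CM,\CN)\simeq\BMod_{A|B}(\CC)$ also only covers $\CN$ of the form $\RMod_B(\CC)$, whereas $(3)$ quantifies over all semisimple left $\CC$-modules $\CN$, and in a bare semisimple monoidal category not every such $\CN$ need be of that form.

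The repair is to reach $(3)$ from $(1)$ (or $(2)$) rather than from $(4)$: for any left $\CC$-module $\CN$ one has $\Fun_\CC(\CM,\CN)\simeq\LMod_A(\CN)$ via $F\mapsto F(A)$, and Proposition \ref{prop:sep-semisim} makes $\LMod_A(\CN)$ semisimple for \emph{separable} $A$ and any semisimple $\CN$; this is precisely the paper's route $(1)\Rightarrow(2)\Rightarrow(3)\Rightarrow(4)\Rightarrow(5)\Rightarrow(1)$, which avoids ever needing a direct $(4)\Rightarrow(3)$. Two smaller points: your $(1)\Rightarrow(5)$ via $\CC\boxtimes\CC^{\rev}$ invokes Deligne tensor products that are not available for a general semisimple monoidal category (the proposition is stated in that generality); the cleaner version is $\BMod_{A|A}(\CC)=\LMod_A(\RMod_A(\CC))$ with Proposition \ref{prop:sep-semisim} applied to the semisimple left $\CC$-module $\RMod_A(\CC)$. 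And your separate constructions for $(4)\Rightarrow(2)$ and $(2)\Rightarrow(1)$ are unnecessary once the single cycle above is in place, since $(2)\Rightarrow(3)$ already follows by the same $\LMod_B(\CN)$ argument and $(5)\Rightarrow(1)$ closes the loop.
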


\begin{proof}
$(1)\Rightarrow(2)$ is obvious. For every semisimple left $\CC$-module $\CN$, the functor $F\mapsto F(A)$ gives an equivalence $\Fun_\CC(\CM,\CN) \simeq \LMod_A(\CN)$ with the inverse functor given by $x\mapsto - \otimes_A x$. Applying Proposition \ref{prop:sep-semisim}, we obtain $(2)\Rightarrow(3)$. $(3)\Rightarrow(4)$ is obvious. $(4)\Rightarrow(5)$ follows from the equivalence $\BMod_{A|A}(\CC) \simeq \Fun_\CC(\CM,\CM)$. $(5)\Rightarrow(1)$ is obvious.
\end{proof}

\begin{cor} \label{cor:mor-sep}
Let $A$ be a separable algebra in a semisimple monoidal category $\CC$. Then every algebra Morita equivalent to $A$ is separable.
\end{cor}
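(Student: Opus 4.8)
The plan is to deduce this immediately from Proposition \ref{prop:sep}, whose list of equivalent characterizations of separability is manifestly invariant under Morita equivalence. The whole point of that proposition is that, for a \emph{semisimple} algebra, separability is detected by properties of the module category $\RMod_A(\CC)$ that only depend on it up to equivalence of left $\CC$-modules; once this is in hand, Morita invariance is essentially formal.

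Concretely, I would fix an algebra $A'$ that is Morita equivalent to $A$, which by definition means that $\RMod_{A'}(\CC)\simeq\RMod_A(\CC)$ as left $\CC$-modules. Before invoking Proposition \ref{prop:sep} for $A'$, I must check its standing hypothesis, namely that $A'$ is semisimple. This is immediate: by Corollary \ref{cor:sep-semi} the separable algebra $A$ is semisimple, so $\RMod_A(\CC)$ is semisimple; transporting along the equivalence, $\RMod_{A'}(\CC)$ is semisimple, that is, $A'$ is semisimple.

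Now write $\CM=\RMod_A(\CC)$ and $\CM'=\RMod_{A'}(\CC)$, so that $\CM'\simeq\CM$. Since $A$ is separable, condition $(2)$ of Proposition \ref{prop:sep} holds for $A$ with witness $B=A$; but that condition refers only to the left $\CC$-module $\CM$ up to equivalence, so it holds verbatim for $A'$ with the same witness $B=A$, because $\CM'\simeq\CM=\RMod_A(\CC)$. Applying the implication $(2)\Rightarrow(1)$ of Proposition \ref{prop:sep} to $A'$ then yields that $A'$ is separable. One may equally use condition $(4)$: the equivalence $\CM'\simeq\CM$ induces $\Fun_\CC(\CM',\CM')\simeq\Fun_\CC(\CM,\CM)$, which is semisimple because $A$ is separable, so $A'$ is separable by $(4)\Rightarrow(1)$.

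I do not expect a genuine obstacle here, since the substance has already been absorbed into Proposition \ref{prop:sep}, which recasts separability of a semisimple algebra as a property of $\RMod_A(\CC)$ preserved by equivalences of left $\CC$-modules. The only step demanding any care is the preliminary observation that the Morita partner $A'$ is again semisimple, which is what makes Proposition \ref{prop:sep} applicable to $A'$ in the first place.
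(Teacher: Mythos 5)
Your proposal is correct and is exactly the intended argument: the paper states this corollary immediately after Proposition \ref{prop:sep} with no written proof, precisely because it follows by applying the equivalence $(2)\Leftrightarrow(1)$ there with witness $B=A$. Your preliminary check that the Morita partner $A'$ is semisimple (via Corollary \ref{cor:sep-semi} and transport along the module equivalence) is the right point to make explicit.
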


\begin{cor} \label{cor:sep-ext}
Let $A$ be an algebra in a semisimple monoidal category $\CC$ over $k$ and let $k'/k$ be a separable finite extension. Then $A$ is a separable algebra in $\CC$ if and only if $A\boxtimes k'$ is a separable algebra in $\CC\boxtimes\bk'$.
\end{cor}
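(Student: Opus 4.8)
The plan is to reduce everything to the characterization of separability through semisimplicity of the bimodule category (Proposition \ref{prop:sep}), combined with the compatibility of base extension with the relevant categorical constructions (Remark \ref{rem:sep-ext}). The one genuinely classical input is the descent of semisimplicity along a field extension: for a finite-dimensional $k$-algebra $R$, if $R\otimes_k k'$ is semisimple then $R$ is semisimple, since the image of the Jacobson radical $J(R)\otimes_k k'$ is a nilpotent ideal of $R\otimes_k k'$ and faithful flatness of $k'/k$ forces $J(R)=0$. Phrased categorically: if $\CD$ is a finite category over $k$ and $\CD\boxtimes\bk'$ is semisimple, then $\CD$ is semisimple. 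This direction holds for an arbitrary finite extension, whereas the reverse implication (base extension \emph{preserves} semisimplicity) is exactly where separability of $k'/k$ enters, and is supplied by Remark \ref{rem:sep-ext}.

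The forward implication is the easy one, and does not even require $k'/k$ to be separable. If $A$ is separable, choose an $A$-$A$-bimodule splitting $\iota\colon A\to A\otimes A$ of the multiplication. The base extension functor $-\boxtimes k'\colon\CC\to\CC\boxtimes\bk'$ is monoidal and carries $A$ to $A\boxtimes k'$, the multiplication of $A$ to that of $A\boxtimes k'$, and bimodule maps to bimodule maps; applying it to $\iota$ produces a bimodule splitting of the multiplication of $A\boxtimes k'$, so $A\boxtimes k'$ is separable.

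For the reverse implication I would first record the observation that $A$ is semisimple if and only if $A\boxtimes k'$ is. Using Remark \ref{rem:sep-ext} we have $\RMod_{A\boxtimes k'}(\CC\boxtimes\bk')\simeq\RMod_A(\CC)\boxtimes\bk'$; the ascent direction is preservation of semisimplicity under separable base extension, and the descent direction is the classical fact above. Now suppose $A\boxtimes k'$ is separable. By Corollary \ref{cor:sep-semi} it is semisimple, hence, by descent, so is $A$. In particular Proposition \ref{prop:sep} applies both to $A$ in $\CC$ and to $A\boxtimes k'$ in the semisimple monoidal category $\CC\boxtimes\bk'$ (semisimple by Remark \ref{rem:sep-ext}, using separability of $k'/k$). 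Since $A\boxtimes k'$ is separable, Proposition \ref{prop:sep} gives that $\BMod_{A\boxtimes k'|A\boxtimes k'}(\CC\boxtimes\bk')\simeq\BMod_{A|A}(\CC)\boxtimes\bk'$ is semisimple. Descent of semisimplicity then shows $\BMod_{A|A}(\CC)$ is semisimple, and a final application of Proposition \ref{prop:sep} concludes that $A$ is separable.

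The main obstacle is essentially bookkeeping: Proposition \ref{prop:sep} is available only for semisimple algebras, so one must first secure the semisimplicity of $A$ (equivalently of $A\boxtimes k'$) before invoking the bimodule-category criterion, and one must ensure that $\CC\boxtimes\bk'$ is itself a semisimple monoidal category so that the proposition can be applied there. Both of these rely on the separability of $k'/k$ through Remark \ref{rem:sep-ext}, which is the only point where this hypothesis is needed. Everything else is the interplay between the two base-extension compatibilities (for $\RMod$ and for $\BMod$) and the classical semisimplicity-descent lemma.
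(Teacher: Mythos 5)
Your proof is correct and follows essentially the same route as the paper: both reduce the statement to the equivalence $\BMod_{A\boxtimes k'|A\boxtimes k'}(\CC\boxtimes\bk')\simeq\BMod_{A|A}(\CC)\boxtimes\bk'$ from Remark \ref{rem:sep-ext}, transfer semisimplicity of the bimodule category across the base extension, and invoke Proposition \ref{prop:sep}. You are merely more explicit than the paper about the descent of semisimplicity (via the Jacobson radical) and about first securing the semisimplicity hypotheses needed to apply Proposition \ref{prop:sep}, which is a reasonable bit of added care rather than a different argument.
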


\begin{proof}
We have $\BMod_{A\boxtimes k'|A\boxtimes k'}(\CC\boxtimes\bk') \simeq \BMod_{A|A}(\CC)\boxtimes\bk'$ (c.f. Remark \ref{rem:sep-ext}). So, $\BMod_{A\boxtimes k'|A\boxtimes k'}(\CC\boxtimes\bk')$ is semisimple if and only if $\BMod_{A|A}(\CC)$ is semisimple. Then the claim follows from Proposition \ref{prop:sep}.
\end{proof}

\begin{thm} \label{thm:separable}
An algebra in a multi-fusion category $\CC$ is separable if and only if it is a direct sum of separable matrix algebras; a matrix algebra $A\simeq\bigoplus_{i,j=1}^n A_{i j}$ in $\CC$ is separable if and only if the division algebra $A_{1 1}$ is separable.
\end{thm}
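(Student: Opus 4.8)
The plan is to reduce everything to the classification of semisimple algebras in Theorem \ref{thm:semisimple}, the Morita-invariance of separability in Corollary \ref{cor:mor-sep}, and the characterizations of separability in Proposition \ref{prop:sep}. I would dispose of the second assertion first, as it is the most direct. A matrix algebra $A \simeq \bigoplus_{i,j=1}^n A_{ij}$ is Morita equivalent to each diagonal block $A_{ii}$, in particular to $A_{11}$; this is recorded in the remark following the definition of a matrix algebra. Since Morita equivalence is symmetric and, by Corollary \ref{cor:mor-sep}, separability is preserved under passage to a Morita-equivalent algebra, it follows at once that $A$ is separable if and only if $A_{11}$ is.

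For the first assertion, the ``if'' direction is elementary. Writing $A \simeq \bigoplus_k A^{(k)}$ as a direct sum of separable matrix algebras, the orthogonal central idempotents effecting this decomposition force the multiplication $A \otimes A \to A$ to annihilate each off-diagonal summand $A^{(k)} \otimes A^{(l)}$ with $k \neq l$, so that it is the direct sum of the multiplications of the $A^{(k)}$. Assembling the given bimodule splittings of the $A^{(k)}$ into a single block-diagonal morphism $A \to A \otimes A$ then provides an $A$-$A$-bimodule splitting of the multiplication, exhibiting $A$ as separable.

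For the ``only if'' direction, I would first apply Corollary \ref{cor:sep-semi} to conclude that a separable $A$ is semisimple, and then Theorem \ref{thm:semisimple}(2) to write $A \simeq \bigoplus_k A^{(k)}$ as a direct sum of matrix algebras. It then remains only to check that each summand is separable. The key point is that this algebra decomposition is controlled by orthogonal central idempotents, along which the bimodule category splits as $\BMod_{A|A}(\CC) \simeq \bigoplus_{k,l} \BMod_{A^{(k)}|A^{(l)}}(\CC)$. By Proposition \ref{prop:sep}, separability of $A$ is equivalent to semisimplicity of $\BMod_{A|A}(\CC)$; hence each diagonal summand $\BMod_{A^{(k)}|A^{(k)}}(\CC)$ is semisimple, and since each $A^{(k)}$ is a matrix algebra and therefore a semisimple algebra, a second application of Proposition \ref{prop:sep} gives that $A^{(k)}$ is separable.

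The step I expect to require the most care is the Peirce-type decomposition $\BMod_{A|A}(\CC) \simeq \bigoplus_{k,l} \BMod_{A^{(k)}|A^{(l)}}(\CC)$ induced by an algebra direct sum, namely the verification that the orthogonal central idempotents split every $A$-$A$-bimodule into blocks compatibly with the $\CC$-module structure. Everything else is either a direct assembly of splittings or an appeal to the results already established, so once this decomposition is justified the theorem follows by combining Proposition \ref{prop:sep} with Theorem \ref{thm:semisimple}.
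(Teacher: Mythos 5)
Your proposal is correct and follows essentially the same route as the paper, whose proof is simply the instruction to combine Theorem \ref{thm:semisimple}, Corollary \ref{cor:sep-semi} and Corollary \ref{cor:mor-sep}: separable $\Rightarrow$ semisimple $\Rightarrow$ direct sum of matrix algebras, with the matrix-algebra case handled by Morita invariance of separability. The only place you go beyond the paper's one-line proof is in spelling out the reduction to direct summands (via the Peirce decomposition of $\BMod_{A|A}(\CC)$ and Proposition \ref{prop:sep}), which is a legitimate way to fill in a detail the paper leaves implicit.
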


\begin{proof}
Combine Theorem \ref{thm:semisimple}, Corollary \ref{cor:sep-semi} and Corollary \ref{cor:mor-sep}.
\end{proof}

\begin{thm}\label{thm:sep-fun}
Let $A$ be an algebra in a multi-fusion category $\CC$, and $\CM=\RMod_A(\CC)$. The following conditions are equivalent:
\begin{enumerate}
\item The algebra $A$ is separable.
\item The canonical morphism $v:[A,-]\otimes A\to\Id_\CM$ in $\Fun_\CC(\CM,\CM)$ splits.
\item There exists a morphism $A^L\to A$ in $\CM$ such that the induced morphism
\begin{equation} \label{eq:idm}
\Id_\CM \to [A,-]\otimes A^L \to [A,-]\otimes A \xrightarrow{v} \Id_\CM
\end{equation}
is identity.
%\item There exists a morphism $A^L\to A$ in $\CM$ such that the induced morphism $\Id_\CM \to [A,-]\otimes A^L \to [A,-]\otimes A \xrightarrow{v} \Id_\CM$ is an isomorphism.
\item There exists a morphism $g:A^L\to A$ in $\CM$ such that the induced morphism
\begin{equation} \label{eq:beta}
\beta: A \xrightarrow{m'} A\otimes A^L \xrightarrow{\Id_A\otimes g} A\otimes A \xrightarrow{m} A
\end{equation}
is an isomorphism , where $m$ is the multiplication and $m'$ is adjoint to $m$.
\end{enumerate}
\end{thm}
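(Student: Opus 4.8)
The plan is to exploit the threefold adjunction $(-\otimes A)\dashv[A,-]\dashv(-\otimes A^L)$ supplied by Remarks \ref{rem:rigid-inhom}--\ref{rem:adjoint-al}, together with the equivalence $\Fun_\CC(\CM,\CM)\simeq\BMod_{A|A}(\CC)$, $F\mapsto F(A)$, already used in the proof of Proposition \ref{prop:sep}. Write $U=[A,-]$, $F=-\otimes A$, $F'=-\otimes A^L$, so that $v$ is the counit of $F\dashv U$ and $u$ the unit of $U\dashv F'$. Under $F\mapsto F(A)$ the unit object $\Id_\CM$ goes to $A$, the functor $[A,-]\otimes A=F\circ U$ goes to $A\otimes A$ with its outer bimodule structure, and $v$ goes to the multiplication $m\colon A\otimes A\to A$. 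Since separability of $A$ is by definition the splitting of $m$ as a bimodule map, and an equivalence preserves and reflects split epimorphisms, this gives $(1)\Leftrightarrow(2)$ at once.

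For $(3)\Rightarrow(2)$ there is nothing to do: the composite in \eqref{eq:idm} is $v\circ s$ with $s:=([A,-]\otimes g)\circ u\colon\Id_\CM\to[A,-]\otimes A$, so if it equals $\Id_\CM$ then $s$ is a section of $v$. The substance of $(2)\Rightarrow(3)$ is the claim that
$$\Phi\colon\Hom_\CM(A^L,A)\longrightarrow\Hom_{\Fun_\CC(\CM,\CM)}(\Id_\CM,[A,-]\otimes A),\qquad g\longmapsto([A,-]\otimes g)\circ u,$$
is a bijection. I would prove this by transporting along $F\mapsto F(A)$ to $\Hom_{\BMod_{A|A}(\CC)}(A,A\otimes A)$ and restricting a bimodule map along the algebra unit $\one\to A$: a short computation with the triangle identities shows that $\Phi(g)$ corresponds to $(\Id_A\otimes g)\circ\coev\colon\one\to A\otimes A$, i.e.\ to the image of $g$ under the duality isomorphism $\Hom_\CC(A^L,A)\simeq\Hom_\CC(\one,A\otimes A)$. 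Thus $\Phi$ is injective, and the remaining point is to identify its image: one checks that the left-$A$-linearity forced on a bimodule map $A\to A\otimes A$ corresponds, after bending the leg labelled $A^L$ by means of $\coev$ and $\ev$, exactly to right-$A$-linearity of $g$. Granting this, any section $s$ of $v$ guaranteed by $(2)$ has the form $\Phi(g)$, and then \eqref{eq:idm} reads $v\circ\Phi(g)=v\circ s=\Id_\CM$, which is $(3)$.

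Finally, for $(3)\Leftrightarrow(4)$ I would first identify $m'$ with the component $u_A\colon A\to A\otimes A^L$ of the unit at $A$; this is the compatibility between the duality adjunction $(-\otimes A)\dashv(-\otimes A^L)$ in $\CC$ and the module adjunction $U\dashv F'$, and it follows from the triangle identities. With this identification, evaluating the natural transformation \eqref{eq:idm} at $A\in\CM$ gives componentwise $v_A\circ(\Id_A\otimes g)\circ u_A=m\circ(\Id_A\otimes g)\circ m'=\beta$. Since $F\mapsto F(A)$ is fully faithful, it restricts to an isomorphism $\End_{\Fun_\CC(\CM,\CM)}(\Id_\CM)\simeq\End_{\BMod_{A|A}(\CC)}(A)$, so the composite \eqref{eq:idm} equals $\Id_\CM$ if and only if $\beta=\Id_A$; this yields $(3)\Rightarrow(4)$ with the same $g$. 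For $(4)\Rightarrow(3)$, note that $\beta$, being the value at $A$ of a natural transformation $\Id_\CM\Rightarrow\Id_\CM$, is automatically an $A$-$A$-bimodule endomorphism; if it is invertible then its inverse is again a bimodule map, and replacing $g$ by $g'':=\beta^{-1}\circ g$ and using the left-$A$-linearity relation $m\circ(\Id_A\otimes\beta^{-1})=\beta^{-1}\circ m$ turns $\beta$ into $\beta^{-1}\circ\beta=\Id_A$, so \eqref{eq:idm} becomes the identity.

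The main obstacle is the bijectivity of $\Phi$, and more precisely the diagram-chase showing that the centrality condition defining bimodule maps $A\to A\otimes A$ matches right-$A$-linearity of the adjoint morphism $g\colon A^L\to A$, together with the clean identification $u_A=m'$. Both are soft statements about bending duals, but they are exactly where the left/right dual conventions of the rigid structure must be tracked carefully; everything else is formal adjunction calculus and the equivalence $\Fun_\CC(\CM,\CM)\simeq\BMod_{A|A}(\CC)$.
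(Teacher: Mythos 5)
Your proposal is correct and follows essentially the same route as the paper: the equivalence $\Fun_\CC(\CM,\CM)\simeq\BMod_{A|A}(\CC)$ for $(1)\Leftrightarrow(2)$, the identification of sections of $v$ with morphisms $A^L\to A$ in $\CM$ for $(2)\Leftrightarrow(3)$, and evaluation at $A$ for the passage to $\beta$. The only divergences are cosmetic: where you propose a bimodule-side diagram chase to show your $\Phi$ is bijective, the paper obtains the factorization $\Id_\CM\to F'\circ G\to F\circ G$ in one line as the mate of the section under the adjunction $[A,-]\dashv(-\otimes A^L)$ together with $\Fun_\CC(\CC,\CM)\simeq\CM$; and your explicit $(4)\Rightarrow(3)$ via $g\mapsto\beta^{-1}\circ g$ replaces the paper's direct $(4)\Rightarrow(1)$.
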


\begin{proof}
$(1)\Leftrightarrow(2)$ The equivalence $\Fun_\CC(\CM,\CM) \simeq \BMod_{A|A}(\CC)$ carries the morphism $v$ to the multiplication $m:A\otimes A\to A$.

%Since $\CM$ is indecomposable, $\Id_\CM$ is a simple object of $\Fun_\CC(\CM,\CM)$. So, $(3)\Rightarrow(2)$.

$(2)\Rightarrow(3)$ The left $\CC$-module functor $G=[A,-]:\CM\to\CC$ has a left adjoint $F=-\otimes A$ and a right adjoint $F'=-\otimes A^L$.
%Note that $v$ can be identified with the counit map $F\circ G\to\Id_\CM$.
Suppose that $\gamma:\Id_\CM \to F\circ G$ exhibits the counit map $v$ split. Then $\gamma$ is adjoint to a morphism $F'\to F$ in $\Fun_\CC(\CC,\CM)\simeq\CM$ so that $\gamma$ is decomposed as $\Id_\CM \to F'\circ G \to F\circ G$. Note that the morphism $F'\to F$ is induced by a morphism $A^L\to A$ in $\CM$.

$(3)\Rightarrow(4)$ Apply \eqref{eq:idm} on $A$.

$(4)\Rightarrow(1)$ is obvious.
\end{proof}

\begin{cor}\label{cor:div-sep}
Let $A$ be a division algebra in a multi-fusion category $\CC$. Then $A$ is separable if and only if there exist isomorphisms of right $A$-modules $f:A\to A^L$ and $g:A^L\to A$ such that the composition
$$\alpha: \one \to A\otimes A^L \xrightarrow{f\otimes g} A^L\otimes A \to \one$$
does not vanish.
\end{cor}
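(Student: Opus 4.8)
The plan is to deduce the corollary from Theorem \ref{thm:sep-fun}(4), converting the module-theoretic condition there (that a certain endomorphism $\beta$ of $A$ be invertible) into the numerical condition $\alpha\neq 0$, using throughout that $A$ is a division algebra. First I would record the reductions coming from the division hypothesis. Isomorphisms $f:A\to A^L$ of right $A$-modules exist by Proposition \ref{prop:div}. By Remark \ref{rem:division}, $D:=\Hom_\CC(\one,A)\cong\End_{\RMod_A(\CC)}(A)$ is an ordinary division ring; since $A$ and $A^L$ are isomorphic simple right $A$-modules, every nonzero morphism between any two of $A,A^L$ is an isomorphism, and an endomorphism of $A$ in $\CM=\RMod_A(\CC)$ is invertible iff it is nonzero. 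Thus Theorem \ref{thm:sep-fun}(4) becomes: $A$ is separable iff there is a right $A$-module map $g:A^L\to A$ with $\beta=m\circ(\Id_A\otimes g)\circ m'\neq 0$, and any such $g$ is automatically an isomorphism.

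The technical core is to compute $\beta$. Unwinding the adjunction of Remark \ref{rem:adjoint-al}, one finds $m'=(m\otimes\Id_{A^L})\circ(\Id_A\otimes u)$, where $u:\one\to A\otimes A^L$ is the coevaluation. A direct computation then gives $\beta=m\circ(\Id_A\otimes c)$, i.e. $\beta$ is multiplication by the element $c:=m\circ(\Id_A\otimes g)\circ u\in\Hom_\CC(\one,A)=D$. Moreover, using that $g$ is a right $A$-module map together with the compatibility of $u$ with the right $A^L$-module structure, one checks that $c$ is central; hence $\beta$ is the image of $c$ under $D\cong\End_\CM(A)$, and in particular $\beta\neq 0\iff c\neq 0$.

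It remains to relate $c$ to $\alpha$. Here I would use that an isomorphism $f:A\to A^L$ of right modules is equivalent to a nondegenerate associative form $\tilde f:=v\circ(f\otimes\Id_A):A\otimes A\to\one$ (associativity is precisely the statement that $f$ is a module map, $v:A^L\otimes A\to\one$ being the evaluation), and that every associative form factors as $\tilde f=\lambda\circ m$ for the functional $\lambda:=\tilde f\circ(\Id_A\otimes\text{unit}):A\to\one$. Since $\alpha=\tilde f\circ(\Id_A\otimes g)\circ u$ and $c=m\circ(\Id_A\otimes g)\circ u$, this yields the clean identity $\alpha=\lambda(c)$. The implication ``$\alpha\neq 0\Rightarrow A$ separable'' is then immediate: $\alpha\neq 0$ forces $c\neq 0$, hence $\beta\neq 0$, and Theorem \ref{thm:sep-fun}(4) applies to $g$.

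For the converse I would argue as follows, and this is the step I expect to be the main obstacle. Given $A$ separable, choose $g$ with $c\neq 0$; as $c$ is a nonzero element of the division ring $D$ it is invertible. As $f$ ranges over all isomorphisms $A\to A^L$ — a torsor over $\Aut_\CM(A)=D^\times$ — the associated functional ranges over $\lambda_0\circ(\text{left multiplication by }d)$ with $d\in D^\times$, so $\alpha=\lambda_0(dc)$ sweeps out $\lambda_0(D^\times)$. The subtlety is that nondegeneracy of $\tilde f$ does not by itself prevent $\lambda_0$ from annihilating the particular element $c$; one must use both the freedom in $f$ and the division-ring structure of $D$. Concretely, $\lambda_0$ is nonzero in $\Hom_\CC(A,\one)$, and the composition pairing $\Hom_\CC(\one,A)\times\Hom_\CC(A,\one)\to\End_\CC(\one)$ is nondegenerate in the semisimple category $\CC$, so $\lambda_0\circ d_0\neq 0$ for some $d_0\in D$; as $D$ is a division ring $d_0\in D^\times$, whence $\lambda_0(D^\times)\neq 0$ and some choice of $f$ achieves $\alpha\neq 0$.
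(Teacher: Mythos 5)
Your proof is correct and follows essentially the same route as the paper's: both reduce to Theorem \ref{thm:sep-fun}(4) via the identity $\alpha = u^L\circ f\circ\beta\circ u$ (your $\alpha=\lambda_f(c)$ with $c=\beta\circ u$ and $\lambda_f=u^L\circ f$ is exactly this factorization), and both use simplicity of $A$ as a right $A$-module to pass between ``nonzero'' and ``isomorphism''. Your final step, choosing $f$ via nondegeneracy of the composition pairing $\Hom_\CC(\one,A)\times\Hom_\CC(A,\one)\to\End_\CC(\one)$, correctly fills in a detail that the paper asserts without comment.
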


\begin{proof}
The morphism $\alpha$ coincides with $\one \xrightarrow{u} A \xrightarrow{\beta} A \xrightarrow{f} A^L \xrightarrow{u^L} \one$, where $u$ is the unit and $\beta$ is defined in \eqref{eq:beta}.
Suppose that $A$ is separable. Then by Theorem \ref{thm:sep-fun}, there exists an isomorphism $g:A^L\to A$ such that $\beta$ is an isomorphism. So, there is an isomorphism $f:A\to A^L$ such that $\alpha\ne0$. Conversely, suppose there exist $f,g$ such that $\alpha\ne0$. Then $\beta$ is an isomorphism. Thus $A$ is separable by Theorem \ref{thm:sep-fun}.
\end{proof}

%\section{The issue of Drinfeld center}
\section{Separability of semisimple algebras}

\begin{defn}
We say that a semisimple category $\CC$ over $k$ is {\em homogeneous} if $\Hom_\CC(x,x)\simeq k$ for every simple object $x\in\CC$.
\end{defn}

\begin{rem}
If $k$ is algebraically closed, then every semisimple category over $k$ is homogeneous.
\end{rem}

\begin{thm} \label{thm:hom-semi-sep}
Let $\CC$ be a homogeneous multi-fusion category such that $\FZ(\CC)$ is semisimple, and let $A$ be a semisimple algebra in $\CC$ such that $\RMod_A(\CC)$ is homogeneous. Then $A$ is separable.
\end{thm}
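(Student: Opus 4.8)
The plan is to peel off the structural reductions first, collapse the problem to the nonvanishing of a single scalar via homogeneity, and then confront the one genuinely nontrivial point: deriving that nonvanishing from the semisimplicity of $\FZ(\CC)$.

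First I would dispose of the structural reductions. By Theorem \ref{thm:semisimple} the semisimple algebra $A$ is a direct sum of matrix algebras, and by Theorem \ref{thm:separable} it suffices to prove that the corner division algebra $A_{11}$ of each matrix summand is separable. Since $A_{11}$ is Morita equivalent to its summand, Corollary \ref{cor:ind-mod} identifies $\RMod_{A_{11}}(\CC)$ with a direct summand of $\RMod_A(\CC)$, which is therefore again homogeneous. Passing to a fusion block via Remark \ref{rem:mf-decomp} (which preserves the center), I may thus assume that $\CC$ is fusion and homogeneous, so that $\Hom_\CC(\one,\one)\simeq k$, that $A$ is a division algebra, and that $\CM:=\RMod_A(\CC)$ is homogeneous and indecomposable.

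Next I would invoke the division-algebra criterion. By Proposition \ref{prop:div} there is an isomorphism of right $A$-modules $f\colon A\to A^L$, and by Corollary \ref{cor:div-sep} it suffices to exhibit isomorphisms $f,g$ for which the scalar
$$\alpha\colon \one \to A\otimes A^L \xrightarrow{f\otimes g} A^L\otimes A \to \one$$
is nonzero. Here homogeneity does the essential bookkeeping. Since $A$ is a simple object of the homogeneous category $\CM$, Remark \ref{rem:division} gives $\End_{\RMod_A(\CC)}(A)\simeq\Hom_\CC(\one,A)\simeq k$, so every right-module endomorphism of $A$ is a scalar. Hence any isomorphism $g\colon A^L\to A$ satisfies $g\circ f=\mu\,\Id_A$ with $\mu\in k^\times$, and $\alpha=\mu\cdot\big(v\circ(f\otimes f^{-1})\circ u\big)$, where $u,v$ are the duality maps of the pair $(A,A^L)$. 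A further rescaling $f\mapsto\nu f$ leaves $v\circ(f\otimes f^{-1})\circ u$ unchanged, so this quantity is an intrinsic scalar $\tau\in\Hom_\CC(\one,\one)\simeq k$ — a quantum dimension of $A$ read off through the self-duality $f$ — and the existence of a nonzero $\alpha$ is \emph{equivalent} to $\tau\neq0$.

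The remaining and decisive step is to prove $\tau\neq0$, and this is exactly where the hypothesis that $\FZ(\CC)$ is semisimple must enter; I expect it to be the main obstacle. The difficulty is genuine rather than formal: an inseparable finite extension $k'/k$, viewed as a division algebra in $\bk$, has $\tau=0$, so no purely diagrammatic manipulation can force nonvanishing and one truly needs the center. My plan is to show that semisimplicity of $\FZ(\CC)$ forces a global dimension of $\CC$ to be an \emph{invertible} scalar of $k$ — homogeneity again being what guarantees that the relevant endomorphism algebras of simple objects are all $k$, so these dimensions are honest elements of $k$ and not of larger division rings — and then to realize $\tau$, up to this invertible global factor, as a sum of local contributions built from the half-braidings, in the spirit of the averaging arguments identifying nonvanishing global dimension with semisimplicity of the center (cf. \cite{Mu2,ENO,BV}, recovered here in Corollary \ref{cor:fus-semi-dim}). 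Once $\tau\neq0$ is established, Corollary \ref{cor:div-sep} gives separability of the division algebra, and the reductions of the first paragraph upgrade this to separability of $A$.
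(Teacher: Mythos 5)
Your structural reductions and the reformulation of the problem are sound: reducing to a (simple, hence) division algebra via Theorem \ref{thm:semisimple}, Theorem \ref{thm:separable} and Corollary \ref{cor:ind-mod}, and then observing that by homogeneity the criterion of Corollary \ref{cor:div-sep} collapses to the nonvanishing of the single scalar $\tau=\dim A$, is exactly the content of Theorem \ref{thm:div-sep-dim}. But the step you yourself flag as decisive --- deducing $\dim A\ne0$ from the semisimplicity of $\FZ(\CC)$ --- is not proved; it is only announced as a plan (``realize $\tau$, up to this invertible global factor, as a sum of local contributions built from the half-braidings''). That step is the entire content of the theorem, and the averaging formula you would need (relating $\dim A$ for an arbitrary division algebra to $\dim\CC$) is not available off the shelf in this generality: the cited arguments of M\"uger/ENO/BV are carried out over algebraically closed fields, often with pivotal structures, and Corollary \ref{cor:fus-semi-dim} in this paper is itself deduced \emph{from} the present theorem's circle of ideas rather than being an independent input you can lean on for arbitrary $A$. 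A secondary soft spot: the reduction ``pass to a fusion block via Remark \ref{rem:mf-decomp}'' is not justified, since a division algebra in an indecomposable multi-fusion category $\bigoplus_{i,j}\CC_{ij}$ need not be concentrated in a diagonal block; the paper deliberately works with the multi-fusion category directly.

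For comparison, the paper's proof never evaluates a dimension scalar. It forms the coend $W=\int^{a\in\CC}a\otimes a^R$, which by homogeneity is $\bigoplus_a a\otimes a^R$ over the simples and carries a canonical half-braiding, hence lives in $\FZ(\CC)$; it then identifies the endofunctor $F=\int^{x\in\CM}[x,-]\otimes x$ of $\CM=\RMod_A(\CC)$ with $W\otimes-$. Semisimplicity of $\FZ(\CC)$ splits $W\to\one$, hence splits $F\to\Id_\CM$ in $\Fun_\CC(\CM,\CM)$; since $A$ may be taken simple, $\Id_\CM$ is a simple object there, so already one summand $[x,-]\otimes x\to\Id_\CM$ splits for some simple $x$, and after replacing $A$ by $[x,x]$ this is precisely criterion $(2)$ of Theorem \ref{thm:sep-fun}. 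If you want to keep your dimension-based route, you would have to supply an actual proof that $\tau\ne0$; as written, the argument stops exactly where the theorem begins.
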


\begin{proof}
Consider the coend $W = \int^{a\in\CC}a\otimes a^R$. Since $\CC$ is homogeneous, $W \simeq \bigoplus a\otimes a^R$ where the direct sum is taken over all simple objects of $\CC$. We have $b\otimes W \simeq \int^{a\in\CC}(b\otimes a)\otimes a^R \simeq \int^{a\in\CC}a\otimes (b^L\otimes a)^R \simeq W\otimes b$ for $b\in\CC$. In this way, $W$ is equipped with a half-braiding hence defines an object of $\FZ(\CC)$.

Let $\CM=\RMod_A(\CC)$ and $F = \int^{x\in\CM}[x,-]\otimes x \in \Fun_\CC(\CM,\CM)$. Since $\CM$ is homogeneous, $F \simeq \bigoplus [x,-]\otimes x$ where the direct sum is taken over all simple objects of $\CM$. We have
\begin{eqnarray*}
F &\simeq& \int^{x\in\CM}\int^{a\in\CC}\Hom_\CC(a,[x,-])\otimes a\otimes x \\
&\simeq& \int^{x\in\CM}\int^{a\in\CC}\Hom_\CM(x,a^R\otimes -)\otimes a\otimes x \\
&\simeq& \int^{a\in\CC}a\otimes a^R\otimes - \\
&\simeq& W\otimes -.
\end{eqnarray*}
Here we used the identity $\int^{x\in\CN}\Hom_\CN(x,-)\otimes x \simeq \Id_\CN$ for a homogeneous semisimple category $\CN$.

Since $\FZ(\CC)$ is semisimple, the canonical morphism $W\to\one$ in $\FZ(\CC)$ splits. Consequently, the canonical morphism $F\to\Id_{\CM}$ in $\Fun_\CC(\CM,\CM)$ splits. We may assume $A$ is a simple algebra so that $\Id_\CM$ is a simple object of $\Fun_\CC(\CM,\CM)$. Thus $[x,-]\odot x \to \Id_\CM$ splits for some simple $x\in\CM$. Replacing $A$ by $[x,x]$ if necessary, we may assume that $x=A$. Then we conclude $A$ is separable by applying Theorem \ref{thm:sep-fun}.
\end{proof}

\begin{cor} \label{cor:cen-per-sep}
Let $\CC$ be a multi-fusion category over a perfect field $k$ such that $\FZ(\CC)$ is semisimple. Then all semisimple algebras in $\CC$ are separable.
\end{cor}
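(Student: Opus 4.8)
The plan is to reduce the statement to the already-established homogeneous case, Theorem \ref{thm:hom-semi-sep}, by passing to a finite field extension over which both $\CC$ and the relevant module category become homogeneous. The key point is that perfectness of $k$ forces every finite extension of $k$ to be separable, so that base extension preserves semisimplicity, commutes with taking the Drinfeld center and the module category (Remark \ref{rem:sep-ext}), and --- most importantly --- allows separability to be transferred back down along Corollary \ref{cor:sep-ext}. So let $A$ be a semisimple algebra in $\CC$ and set $\CM=\RMod_A(\CC)$, which is semisimple. Theorem \ref{thm:hom-semi-sep} would immediately give separability of $A$ if $\CC$ and $\CM$ were both homogeneous; the whole proof is about arranging this after a harmless base change. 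Note that no reduction to the simple case is needed, since Theorem \ref{thm:hom-semi-sep} already handles semisimple $A$ directly once homogeneity is in place.

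Next I would construct the extension $k'$. For each simple object $x$ of $\CC$ the algebra $\Hom_\CC(x,x)$ is a finite-dimensional division algebra over $k$, and likewise for each simple object of $\CM$; since $\CC$ and $\CM$ are finite semisimple categories, there are only finitely many such division algebras in total. I would choose a finite extension $k'/k$ that simultaneously splits all of them --- concretely, $k'$ should contain the normal closures of their centers (each center is separable over the perfect field $k$) and should split the resulting central simple algebras over each factor. Because $k$ is perfect, this finite extension $k'/k$ is automatically separable. After base change, every simple object of $\CC\boxtimes\bk'$ and of $\CM\boxtimes\bk'$ then has endomorphism algebra isomorphic to $k'$, so both categories are homogeneous.

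Finally I would verify the hypotheses of Theorem \ref{thm:hom-semi-sep} for $A\boxtimes k'$ and conclude. Since $k'/k$ is separable, Remark \ref{rem:sep-ext} gives that $\CC\boxtimes\bk'$ is a multi-fusion category, that $\FZ(\CC\boxtimes\bk')\simeq\FZ(\CC)\boxtimes\bk'$ is semisimple (as $\FZ(\CC)$ is semisimple and separable base extension preserves semisimplicity), that $A\boxtimes k'$ is semisimple with $\RMod_{A\boxtimes k'}(\CC\boxtimes\bk')\simeq\CM\boxtimes\bk'$, and that this module category is therefore homogeneous by the previous paragraph. Theorem \ref{thm:hom-semi-sep} then yields that $A\boxtimes k'$ is separable in $\CC\boxtimes\bk'$, and Corollary \ref{cor:sep-ext} --- applicable precisely because $k'/k$ is separable --- descends this to separability of $A$ in $\CC$.

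The main obstacle is the construction of a single finite extension that simultaneously splits all the endomorphism division algebras of both $\CC$ and $\CM$ and is at the same time separable over $k$; this is exactly where the perfectness hypothesis is essential, since for an imperfect base field one could not in general guarantee a separable splitting extension, and the descent step Corollary \ref{cor:sep-ext} would fail. Everything else is bookkeeping ensured by the compatibility of base extension with the Drinfeld center and module categories recorded in Remark \ref{rem:sep-ext}.
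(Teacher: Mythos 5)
Your proposal is correct and follows essentially the same route as the paper: the paper's proof likewise invokes Corollary \ref{cor:sep-ext} and Remark \ref{rem:sep-ext} to reduce, via a separable base extension (automatic since $k$ is perfect), to the case where $\CC$ and $\RMod_A(\CC)$ are homogeneous, and then applies Theorem \ref{thm:hom-semi-sep}. You simply spell out the construction of the splitting field $k'$, which the paper leaves implicit.
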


\begin{proof}
%Let $A$ be a semisimple algebra in $\CC$, and $\CM=\RMod_A(\CC)$. Choose a finite extension $k'/k$ such that $\CC\boxtimes\bk'$ and $\CM\boxtimes\bk'$ are homogeneous over $k'$.  By \cite[??]{KZ}, $\FZ(\CC\boxtimes\bk') \simeq \FZ(\CC)\boxtimes\bk'$ is semisimple, and $\CM\boxtimes\bk' \simeq \RMod_{A\boxtimes k'}(\CC\boxtimes\bk')$. According to Corollary \ref{cor:sep-ext}, we may assume $\CC$ and $\CM$ are homogeneous by replacing $\CC$ and $A$ by $\CC\boxtimes\bk'$ and $A\boxtimes k'$, respectively. Then apply Theorem \ref{thm:hom-semi-sep}.
Let $A$ be a semisimple algebra in $\CC$, and $\CM=\RMod_A(\CC)$. According to Corollary \ref{cor:sep-ext}, we may assume $\CC$ and $\CM$ are homogeneous by applying separable base extension (c.f. Remark \ref{rem:sep-ext}). Then apply Theorem \ref{thm:hom-semi-sep}.
\end{proof}

%\section{Imperfect fields}

%We will
In what follows, we show that the issue of inseparable field extension is never occurs in a multi-fusion category, and then generalize Corollary \ref{cor:cen-per-sep} to imperfect fields.

\begin{rem}
Let $\CC$ be an indecomposable multi-fusion category over $k$ and let $K=\Hom_{\FZ(\CC)}(\one,\one)$. Then $K$ is a field (see Remark \ref{rem:mf-decomp}). Note that the tensor product of $\CC$ is $K$-bilinear, thus $\CC$ defines a multi-fusion category over $K$. By enlarging $k$ if necessary, we may simply assume $K=k$.
\end{rem}

\begin{lem} \label{lem:mf-ext1}
Let $\CC$ be a multi-fusion category. The algebra $\Hom_\CC(\one,\one)$ is separable over $\Hom_{\FZ(\CC)}(\one,\one)$.
\end{lem}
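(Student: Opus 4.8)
The plan is to reduce to the case of a fusion category whose unit endomorphism field is \emph{purely inseparable} over $\Hom_{\FZ}(\one,\one)$, and then rule that case out by a Frobenius argument. Write $F=\Hom_\CC(\one,\one)$ and $K=\Hom_{\FZ(\CC)}(\one,\one)$. First I would decompose $\CC$ into indecomposable multi-fusion summands: both $\FZ(-)$ and $\Hom(\one,\one)$ distribute over this decomposition, and separability of a commutative algebra over a finite product of fields is checked factor by factor, so we may assume $\CC$ is indecomposable and $K$ is a field (Remark \ref{rem:mf-decomp}). By the same remark $F=\bigoplus_i F_i$ with $F_i=\Hom_{\CC_{ii}}(\one,\one)$ and $\FZ(\CC)\simeq\FZ(\CC_{ii})$, so it suffices to prove that each $F_i/K$ is separable; hence I may assume $\CC=\CD$ is a fusion category, $F=\End_\CD(\one)$, $K=\End_{\FZ(\CD)}(\one)\subseteq F$, and (enlarging the ground field) that $k=K$.

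The key elementary input is the identification of $K$ inside $F$. Unwinding the definition of the Drinfeld center and transporting along the unit constraints $\one\otimes a\simeq a\simeq a\otimes\one$, a morphism $\one\to\one$ in $\FZ(\CD)$ is exactly an $f\in F$ whose two induced endomorphisms $f\otimes\Id_a$ and $\Id_a\otimes f$ of $a$ agree for every object $a$. Both of these lie in the center $Z(\End_\CD(a))$, which for the semisimple category $\CD$ is a reduced commutative ring. Thus
\[
K=\{\,f\in F \ :\ f\otimes\Id_a=\Id_a\otimes f \text{ for all } a\in\CD\,\}.
\]

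Now suppose, for contradiction, that $F/K$ is not separable; then $\mathrm{char}\,k=p>0$. Let $S$ be the separable closure of $K$ in $F$, so $F/S$ is purely inseparable of degree $>1$. I would base change along the separable extension $S/K$: the category $\CD'=\CD\boxtimes\bk_S$ is multi-fusion over $S$ with $\FZ(\CD')\simeq\FZ(\CD)\boxtimes\bk_S$ (Remark \ref{rem:sep-ext}), so $\End_{\FZ(\CD')}(\one)=K\otimes_K S=S$ is a field and $\CD'$ is indecomposable. Since $F/S$ is purely inseparable and $S/K$ is separable, $\End_{\CD'}(\one)=F\otimes_K S=\prod_i(F\otimes_S S_i)$ is a product of fields having the diagonal factor $F\otimes_S S=F$; the corresponding primitive idempotent is a simple summand $e_0$ of $\one$ with $\End_{\CD'}(e_0)=F$. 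The diagonal block $\CD_0=\CC'_{00}$ is then a fusion category over $S$ with $\End_{\CD_0}(\one)=F$ and, by Remark \ref{rem:mf-decomp}, $\End_{\FZ(\CD_0)}(\one)=\End_{\FZ(\CD')}(\one)=S$.

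Finally I apply the characterization of $K$ inside $\CD_0$. For any $\alpha\in F$ we have $\alpha^{p^e}\in S$ for some $e$; since the two actions agree on $S$, in the reduced characteristic-$p$ ring $Z(\End_{\CD_0}(a))$ we get $(\alpha\otimes\Id_a)^{p^e}=(\alpha^{p^e}\otimes\Id_a)=(\Id_a\otimes\alpha^{p^e})=(\Id_a\otimes\alpha)^{p^e}$, and the Frobenius $x\mapsto x^{p^e}$ is injective on a reduced ring, so $\alpha\otimes\Id_a=\Id_a\otimes\alpha$ for all $a$. Hence $\alpha\in S$, giving $F=S$, a contradiction. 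I expect the main obstacle to be the middle step: arranging the reduction so that the Frobenius trick becomes available, i.e.\ verifying that after the separable base change the diagonal block $\CD_0$ genuinely realizes $F$ as $\End(\one)$ while its center still has endomorphism field exactly $S=\End_{\FZ(\CD_0)}(\one)$, so that the extension we must kill is honestly purely inseparable over the center's field. The reductions to the indecomposable and fusion cases, and the reducedness of $Z(\End a)$, are routine.
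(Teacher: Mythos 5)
Your proposal is correct and follows essentially the same route as the paper: reduce to an indecomposable fusion block over $K$, identify $K$ inside $F=\Hom_\CC(\one,\one)$ as the equalizer of the left and right unit actions $\lambda_a,\rho_a$, pass to a separable base extension so that $F$ becomes purely inseparable over the new base field, and then use the characteristic-$p$ uniqueness of $p^e$-th roots to force $\lambda_a=\rho_a$ and hence $F=K$. The only (harmless) variation is that you base-change just by the separable closure $S$ of $K$ in $F$ and run the Frobenius argument inside the reduced ring $Z(\End_\CC(a))$, whereas the paper enlarges the base field until every $\Hom_\CC(a,a)$ is itself purely inseparable and then invokes the uniqueness of embeddings between purely inseparable extensions --- the same root-uniqueness fact in a slightly different guise.
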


\begin{proof}
We may assume $\CC$ is a fusion category over $k$ and $\Hom_{\FZ(\CC)}(\one,\one)=k$. By applying separable base extension on $\CC$ and enlarging $k$ correspondingly (c.f. Remark \ref{rem:sep-ext}) if necessary, we may assume further that $\Hom_\CC(a,a)$ is a purely inseparable field over $k$ for every simple $a\in\CC$. Then the embeddings $\lambda_a,\rho_a: \Hom_\CC(\one,\one) \to \Hom_\CC(a,a)$ induced by the left and right actions of $\one$ coincide, because there exists at most embedding between two purely inseparable fields. Note that $\Hom_{\FZ(\CC)}(\one,\one)$ is the maximal subfield of $\Hom_\CC(\one,\one)$ on which $\lambda_a$ and $\rho_a$ agree for all $a$. Consequently, $\Hom_{\FZ(\CC)}(\one,\one) = \Hom_\CC(\one,\one)$.
\end{proof}

\begin{rem}
In general, $\Hom_\CC(\one,\one)$ is not isomorphic to $\Hom_{\FZ(\CC)}(\one,\one)$ for a fusion category $\CC$. For example, let $k'/k$ be a separable finite extension and let $\CC=\Fun_\bk(\bk',\bk')$. Then $\CC$ is a fusion category and $\Hom_\CC(\one,\one) \simeq k'$. But $\FZ(\CC)\simeq\FZ(\bk)$ because $\CC$ is a dual category to $\bk$ \cite{EO}, so $\Hom_{\FZ(\CC)}(\one,\one) \simeq k$.
\end{rem}

\begin{lem} \label{lem:mf-ext2}
Let $\CC$ be a multi-fusion category such that $\Hom_\CC(\one,\one)$ is a direct sum of $k$. Then $\Hom_\CC(a,a)$ is separable over $k$ for every $a\in\CC$.
\end{lem}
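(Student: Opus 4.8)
The plan is to peel off everything inessential by two reductions and then confront a sharp statement about purely inseparable endomorphism fields. First I would reduce to $\CC$ indecomposable (Remark \ref{rem:mf-decomp}); since $\Hom_\CC(\one,\one)$ is then a finite product of copies of $k$, the field $K=\Hom_{\FZ(\CC)}(\one,\one)$ both contains $k$ and injects into this product, so some coordinate projection is a nonzero ring map $K\to k$, forcing $K=k$. It suffices to treat a simple object $a$, so $F:=\Hom_\CC(a,a)$ is a division algebra over $k$. If $a$ lies in an off-diagonal block $\CC_{ij}$, then $a\otimes a^R\in\CC_{ii}$ and $f\mapsto f\otimes\Id_{a^R}$ embeds $F$ into $\Hom_\CC(a\otimes a^R,a\otimes a^R)$; because a subring of a separable $k$-algebra is separable (tensoring with $\bar k$ is exact, and a subring of a reduced ring is reduced), it is enough to prove the statement for objects of the diagonal blocks. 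Thus I would reduce to the case that $\CC$ is a fusion category with $\Hom_\CC(\one,\one)=k$.

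Next I would strip away the harmless separable part by a separable base extension. By Remark \ref{rem:sep-ext} such an extension keeps $\CC$ multi-fusion and keeps $\Hom_\CC(\one,\one)$ split, while classical separability of the finite $k$-algebra $\Hom_\CC(a,a)$ is insensitive to it (separability of a $k$-algebra is preserved and reflected by any field extension, via $\Hom_\CC(a,a)\otimes_k k'\simeq\Hom_{\CC\boxtimes\bk'}(a\boxtimes k',a\boxtimes k')$). Extending along (a large enough finite subextension of) the separable closure, I may therefore assume that $k$ is separably closed and that $\Hom_\CC(x,x)$ is a purely inseparable field extension of $k$ for every simple $x$: over a separably closed field there are no nontrivial separable subextensions and the Brauer group vanishes, so each of these division algebras is forced to be its own center and purely inseparable over $k$. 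In this reduced situation "separable over $k$" means "equal to $k$", so the goal becomes the assertion that every simple object has $\Hom_\CC(x,x)=k$.

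The heart of the matter is this last assertion, and here I would imitate the mechanism of Lemma \ref{lem:mf-ext1}: rigidity supplies, for the simple object $a$, two canonical $k$-algebra maps out of $F$, one from the left action on $a\otimes a^R$ and one from the right action, intertwined through the duality datum by the defining relation $\ev\circ(f^R\otimes\Id_a)=\ev\circ(\Id_{a^R}\otimes f)$. The aim is to force these maps to coincide, using that between purely inseparable fields there is at most one $k$-embedding, and then to feed this rigidity back through the splitting $\Hom_\CC(\one,\one)=k$ (equivalently $\Hom_{\FZ(\CC)}(\one,\one)=k$) to collapse $F$ onto $k$.

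I expect the main obstacle to be precisely the passage from the unit to a general object. In Lemma \ref{lem:mf-ext1} the decisive constraint was that $\Hom_{\FZ(\CC)}(\one,\one)$ is the equalizer of the left and right actions over \emph{all} objects, which pins down $\Hom_\CC(\one,\one)$; for a fixed non-unit $a$ there is no such center-equalizer, and the naive embeddings are masked by multiplicity. Indeed $\one$ occurs in $a\otimes a^R$ with multiplicity $\dim_k F$, so $F$ acts through its regular representation into a matrix algebra over $k$ rather than embedding into $k$, and purely formal rigidity manipulations recover only the categorical center (already equal to $k$), never $F$ itself. The real work is therefore to manufacture a genuinely nondegenerate $k$-valued invariant of $F$—morally a trace or Frobenius pairing coming from the separable algebra $a\otimes a^R$ together with the simplicity of $\one$—whose nonvanishing is guaranteed by $\Hom_\CC(\one,\one)=k$ and which detects inseparability; proving that nonvanishing independently of the (possibly zero) categorical dimension of $a$ is where I would expect nearly all of the difficulty to lie.
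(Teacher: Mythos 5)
There is a genuine gap: your argument stops exactly where the proof has to begin. After your reductions you arrive at the assertion that $\Hom_\CC(x,x)=k$ for every simple $x$ over a separably closed field, and your final paragraph is an explicit admission that you cannot prove it --- you correctly diagnose that imitating Lemma \ref{lem:mf-ext1} only recovers the categorical center and then wish for a ``nondegenerate $k$-valued invariant'' that you never construct. The mechanism the paper actually uses is quite different from the trace/pairing you are hunting for, and in particular needs no nonvanishing of any dimension. With $J=\Hom_\CC(a,a)$ and $K$ its center, the coequalizer $a\otimes K\otimes a^R\rightrightarrows a\otimes a^R\to a\otimes_K a^R$ splits for the trivial reason that $\CC$ is semisimple. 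Applying the exact functor $c\mapsto c\otimes-$, viewed as landing in $\BMod_{B|B}(\bk)$ for $B=\bigoplus_x\Hom_\CC(x,x)$, and restricting to the direct summand $\Hom_\CC(a,a\otimes\one)\otimes_k\Hom_\CC(\one,a^R\otimes a)\simeq J\otimes_k J$ of $F(a\otimes a^R)$ --- this is where the hypothesis that $\Hom_\CC(\one,\one)$ is a sum of copies of $k$ enters --- one obtains a $J$-$J$-bimodule splitting of $J\otimes_k J\to J\otimes_K J$. Since $J$ is automatically separable over its center $K$, this yields separability of $J$ over $k$ directly, with no base change to a separably closed field and no invariant to be shown nonzero.

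A secondary but real error sits in your reduction from off-diagonal to diagonal blocks: you invoke the claim that a subalgebra of a separable finite-dimensional $k$-algebra is separable, justified by ``a subring of a reduced ring is reduced.'' Semisimple algebras are not reduced ($M_n(\bar k)$ has nilpotents), and the claim itself is false: a purely inseparable extension $k'/k$ embeds into the separable algebra $\End_k(k')\simeq M_n(k)$ without being separable over $k$. So even the preparatory reductions would need repair before the (missing) main argument could be attempted.
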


\begin{proof}
Consider the exact functor $F:\CC\to\Fun_\bk(\CC,\CC)$, $c\mapsto c\otimes-$. Let $B=\bigoplus_a\Hom_\CC(a,a)$ where the sum is taken over all simple $a\in\CC$. Then $\Fun_\bk(\CC,\CC)$ can be identified with $\BMod_{B|B}(\bk)$, so that $F(c)$ is identified with the bimodule $\bigoplus_{a,b}\Hom_\CC(a,c\otimes b)$. % \simeq \bigoplus_a\Hom_\CC(a,c^R\otimes a)$.

Let $a\in\CC$ be a simple object and let $K$ be the center of $J=\Hom_\CC(a,a)$. The coequalizer diagram $a\otimes K\otimes a^R \rightrightarrows a\otimes a^R \to a\otimes_K a^R$ splits due to the semisimplicity of $\CC$. Note that $F(a\otimes a^R)$ contains the $J$-$J$-bimodule $\Hom_\CC(a,a\otimes\one) \otimes_k \Hom_\CC(\one,a^R\otimes a) \simeq J\otimes_k J$ as a direct summand. It follows that the coequalizer diagram of $J$-$J$-bimodules $J\otimes_k K\otimes_k J \rightrightarrows J\otimes_k J \to J\otimes_K J$ splits. That is, $J$ is separable over $k$.
\end{proof}

\begin{prop} \label{prop:mf-ext}
Let $\CC$ be multi-fusion category. Then $\Hom_\CC(a,a)$ is separable over $\Hom_{\FZ(\CC)}(\one,\one)$ for every $a\in\CC$.
\end{prop}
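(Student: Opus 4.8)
The plan is to bootstrap from Lemmas \ref{lem:mf-ext1} and \ref{lem:mf-ext2}, using a separable base extension as the bridge between them. First I would reduce to the case that $\CC$ is indecomposable. Any multi-fusion category is a finite direct sum $\CC\simeq\bigoplus_\alpha\CC_\alpha$ of indecomposable ones; under this decomposition an object $a$ splits as $\bigoplus_\alpha a_\alpha$ with $\Hom_\CC(a,a)\simeq\prod_\alpha\Hom_{\CC_\alpha}(a_\alpha,a_\alpha)$, and since the Drinfeld center distributes over direct sums, $\Hom_{\FZ(\CC)}(\one,\one)\simeq\prod_\alpha\Hom_{\FZ(\CC_\alpha)}(\one,\one)$. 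Separability of a finite product of algebras over the corresponding product of fields holds if and only if it holds factor by factor, so it suffices to treat each $\CC_\alpha$ separately. I therefore assume $\CC$ indecomposable and set $k:=\Hom_{\FZ(\CC)}(\one,\one)$, which is a field by Remark \ref{rem:mf-decomp}; viewing $\CC$ as a multi-fusion category over $k$, the goal is to show that $\Hom_\CC(a,a)$ is separable over $k$ for every $a\in\CC$.

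The heart of the argument is to reconcile the two lemmas. Lemma \ref{lem:mf-ext1} gives that $\Hom_\CC(\one,\one)$ is separable over $k$, but Lemma \ref{lem:mf-ext2} requires the stronger hypothesis that $\Hom_\CC(\one,\one)$ be a direct sum of copies of $k$. I would close this gap by a base extension. As a finite separable commutative $k$-algebra, $\Hom_\CC(\one,\one)$ is a product of finite separable field extensions of $k$; choosing a finite Galois (in particular separable) extension $k'/k$ that splits all of them makes $\Hom_\CC(\one,\one)\otimes_k k'$ a product of copies of $k'$. Applying the separable base extension $\CC\boxtimes\bk'$ (Remark \ref{rem:sep-ext}) produces a multi-fusion category over $k'$ with $\Hom_{\CC\boxtimes\bk'}(\one,\one)\simeq\Hom_\CC(\one,\one)\otimes_k k'$ a direct sum of $k'$, and with $\Hom_{\FZ(\CC\boxtimes\bk')}(\one,\one)\simeq\Hom_{\FZ(\CC)}(\one,\one)\otimes_k k'=k'$ since $\FZ(\CC\boxtimes\bk')\simeq\FZ(\CC)\boxtimes\bk'$. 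Lemma \ref{lem:mf-ext2} now applies to $\CC\boxtimes\bk'$ and shows that $\Hom_{\CC\boxtimes\bk'}(a\boxtimes k',a\boxtimes k')\simeq\Hom_\CC(a,a)\otimes_k k'$ is separable over $k'$.

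It remains to descend separability along $k'/k$. Since $\Hom_\CC(a,a)\otimes_k k'$ is separable over $k'$ and $k'/k$ is algebraic, base-changing to an algebraic closure $\overline{k}\supseteq k'$ shows that $\Hom_\CC(a,a)\otimes_k\overline{k}$ is semisimple, whence $\Hom_\CC(a,a)$ is separable over $k$, as desired. I expect the main obstacle to be exactly the hypothesis mismatch between the two lemmas: Lemma \ref{lem:mf-ext1} yields only separability of $\Hom_\CC(\one,\one)$, whereas Lemma \ref{lem:mf-ext2} needs a split unit endomorphism algebra. The decisive point is that separability over a field is precisely what guarantees a splitting extension $k'$ can be found among the separable extensions, which are the ones for which Remark \ref{rem:sep-ext} keeps $\CC\boxtimes\bk'$ multi-fusion and compatible with the center; this is what lets Lemma \ref{lem:mf-ext2} be transported and then descended.
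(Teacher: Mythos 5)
Your proof is correct and follows essentially the same route as the paper's: reduce to the indecomposable case with $k=\Hom_{\FZ(\CC)}(\one,\one)$, invoke Lemma \ref{lem:mf-ext1} to see that $\Hom_\CC(\one,\one)$ is separable over $k$, pass to a separable base extension making it a direct sum of copies of the base field, and conclude by Lemma \ref{lem:mf-ext2}. The only difference is that you make explicit the choice of splitting extension $k'$ and the descent of separability along $k'/k$, both of which the paper leaves implicit in its ``we may assume.''
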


\begin{proof}
We may assume $\CC$ is indecomposable and $\Hom_{\FZ(\CC)}(\one,\one)=k$. Then $\Hom_\CC(\one,\one)$ is separable over $k$ by Lemma \ref{lem:mf-ext1}. So, by applying separable base extension, we may assume $\Hom_\CC(\one,\one)$ is a direct sum of $k$. Then apply Lemma \ref{lem:mf-ext2}.
\end{proof}

\begin{thm} \label{thm:cen-nonper-sep}
Let $\CC$ be a multi-fusion category such that $\FZ(\CC)$ is semisimple. Then a semisimple algebras $A$ in $\CC$ is separable if and only if
%$\Hom_\CC(\one,A)$ is separable over $\Hom_\CC(\one,\one)$.
$\Hom_{\RMod_A(\CC)}(x,x)$ is separable over $\Hom_{\FZ(\CC)}(\one,\one)$ for every $x\in\RMod_A(\CC)$.
\end{thm}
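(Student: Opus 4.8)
The plan is to prove the two implications separately, after reducing to the indecomposable case. Write $K=\Hom_{\FZ(\CC)}(\one,\one)$. If $\CC=\bigoplus_s\CC_s$ is the decomposition into indecomposable multi-fusion categories, then $A$, $\CM=\RMod_A(\CC)$ and $\FZ(\CC)$ all decompose accordingly (objects from distinct summands tensor to zero, so $A=\bigoplus_s A_s$ with $A_s$ an algebra in $\CC_s$), and both the separability of $A$ and the stated condition on endomorphism algebras can be checked one component at a time. Hence I would assume from the outset that $\CC$ is indecomposable, so that $K$ is a field by Remark \ref{rem:mf-decomp}; viewing $\CC$ as a multi-fusion category over $K$, I may further assume $k=K$.

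For the forward implication, suppose $A$ is separable. By Proposition \ref{prop:mf-ext}, $\Hom_\CC(a,a)$ is separable over $K=k$ for every $a\in\CC$. This is precisely the hypothesis of Corollary \ref{cor:sep-nece}, whose conclusion gives that $\Hom_\CM(x,x)$ is separable over $k=K$ for every $x\in\CM$. Thus this direction is immediate from the two cited results.

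For the converse I would mimic the proof of Corollary \ref{cor:cen-per-sep}, reducing to the homogeneous situation by a base extension which I must now arrange to be separable. Assume $\Hom_\CM(x,x)$ is separable over $K$ for every $x$. Since $\CC$ and $\CM$ are finite semisimple there are only finitely many simple objects, and by Proposition \ref{prop:mf-ext} the algebras $\Hom_\CC(a,a)$ for $a$ simple in $\CC$ are separable over $K$ as well. The key fact is that a finite-dimensional separable algebra over $K$ is split by a finite separable extension, so I can choose a single finite separable extension $k'/k$ that simultaneously splits all of the finitely many algebras $\Hom_\CC(a,a)$ and $\Hom_\CM(x,x)$ into products of matrix algebras over $k'$. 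After applying this separable base extension (Remark \ref{rem:sep-ext}), $\CC\boxtimes\bk'$ is a homogeneous multi-fusion category over $k'$ with semisimple Drinfeld center $\FZ(\CC)\boxtimes\bk'$, the algebra $A\boxtimes k'$ is semisimple, and its module category $\RMod_{A\boxtimes k'}(\CC\boxtimes\bk')\simeq\CM\boxtimes\bk'$ is homogeneous. Theorem \ref{thm:hom-semi-sep} then shows $A\boxtimes k'$ is separable, and since $k'/k$ is separable, Corollary \ref{cor:sep-ext} yields that $A$ is separable.

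The main obstacle is exactly the choice of extension in the converse: homogenizing $\CC$ and $\CM$ amounts to splitting the endomorphism division algebras, and a priori such a splitting field need not be separable. The hypothesis that $\Hom_\CM(x,x)$ is separable over $K$, together with the unconditional separability of $\Hom_\CC(a,a)$ furnished by Proposition \ref{prop:mf-ext}, is precisely what guarantees a common finite separable splitting field, permitting the descent through Corollary \ref{cor:sep-ext}. I expect the only routine verifications to be that the algebra, its module category, and the center all decompose over the indecomposable components of $\CC$, and that separability can be tested componentwise.
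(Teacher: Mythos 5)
Your proposal is correct and follows essentially the same route as the paper: reduce to the indecomposable case with $K=k$, get necessity from Proposition \ref{prop:mf-ext} combined with Corollary \ref{cor:sep-nece}, and get sufficiency by running the argument of Corollary \ref{cor:cen-per-sep}, where Proposition \ref{prop:mf-ext} together with the hypothesis on $\Hom_\CM(x,x)$ guarantees that the homogenizing base extension can be chosen finite separable, so that Theorem \ref{thm:hom-semi-sep} applies upstairs and Corollary \ref{cor:sep-ext} descends the conclusion. The paper leaves the sufficiency as ``parallel to Corollary \ref{cor:cen-per-sep}''; your write-up just makes that parallel explicit.
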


\begin{proof}
We may assume $\CC$ is indecomposable and $\Hom_{\FZ(\CC)}(\one,\one)=k$. Necessity of the theorem follows from Proposition \ref{prop:mf-ext} and Corollary \ref{cor:sep-nece}. The proof of the other direction is parallel to that of Corollary \ref{cor:cen-per-sep}, by using Proposition \ref{prop:mf-ext}.
\end{proof}

\section{The dimension of a division algebra}

\begin{defn} \label{def:quan-dim}
Let $A$ be a division algebra in a fusion category $\CC$ over $k$ such that $\Hom_\CC(\one,A)\simeq k$ (this forces $\Hom_\CC(\one,\one)\simeq k$). The {\em dimension} of $A$, denoted as $\dim A$, is the scalar defined by $\one \to A\otimes A^L \xrightarrow{f\otimes f^{-1}} A^L\otimes A \to \one$ where $f:A\to A^L$ is an isomorphism of right $A$-modules. (Such $f$ always exists due to Proposition \ref{prop:div} and $\dim A$ is independent of the choice of $f$.)
\end{defn}

\begin{rem}
The dimension of a division algebra is related to but different from the quantum dimension of an object. For example, let $\CC$ be a homogeneous fusion category. Then the dimension of the division algebra $[a,a]\simeq a\otimes a^L$ for a simple object $a\in\CC$ coincides with the squared dimension \cite{Mu1} (or squared norm \cite{ENO}) of $a$.
\end{rem}

\begin{thm} \label{thm:div-sep-dim}
Let $A$ be a division algebra in a fusion category $\CC$ over $k$ such that $\Hom_\CC(\one,A)\simeq k$. Then $A$ is separable if and only if $\dim A\ne0$.
\end{thm}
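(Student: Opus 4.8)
The plan is to reduce everything to Corollary \ref{cor:div-sep}, which already characterizes separability of the division algebra $A$ by the nonvanishing of a composition $\alpha$ built from isomorphisms $f:A\to A^L$ and $g:A^L\to A$ of right $A$-modules. The definition of $\dim A$ is exactly this composition in the special case $g=f^{-1}$, so the entire content of the theorem is to show that the freedom in choosing $f$ and $g$ in Corollary \ref{cor:div-sep} cannot produce a nonzero $\alpha$ unless $\dim A\ne0$; concretely, that every admissible $\alpha$ is a nonzero scalar multiple of $\dim A$.

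First I would pin down the relevant Hom spaces. By Remark \ref{rem:division}, $\End_{\RMod_A(\CC)}(A)\simeq\Hom_\CC(\one,A)$, which is isomorphic to $k$ by hypothesis. By Proposition \ref{prop:div} we have $A\simeq A^L$ as right $A$-modules, so composing with a fixed such isomorphism on the appropriate side identifies each of $\Hom_{\RMod_A(\CC)}(A,A^L)$ and $\Hom_{\RMod_A(\CC)}(A^L,A)$ with $\End_{\RMod_A(\CC)}(A)\simeq k$ as a $k$-vector space. Hence both Hom spaces are one-dimensional over $k$, and since $A$ and $A^L$ are simple and mutually isomorphic right $A$-modules, every nonzero map between them is an isomorphism.

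Next, fix one isomorphism $f:A\to A^L$. Its inverse $f^{-1}$ spans $\Hom_{\RMod_A(\CC)}(A^L,A)$ over $k$, so any isomorphism $g:A^L\to A$ has the form $g=\lambda f^{-1}$ with $\lambda\in k^\times$. Because the composition $\one\to A\otimes A^L\xrightarrow{f\otimes g}A^L\otimes A\to\one$ is $k$-linear in the $g$-slot, the resulting scalar equals $\lambda\cdot\dim A$, the case $g=f^{-1}$ being the definition of $\dim A$. The same conclusion holds for any other initial choice of $f$, since $\dim A$ is independent of that choice; thus every $\alpha$ arising in Corollary \ref{cor:div-sep} is a nonzero scalar multiple of $\dim A$.

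Finally I would assemble the equivalence. If $\dim A\ne0$, then taking $f$ arbitrary and $g=f^{-1}$ yields $\alpha=\dim A\ne0$, so $A$ is separable by Corollary \ref{cor:div-sep}. Conversely, if $\dim A=0$, then $\alpha=\lambda\dim A=0$ for every admissible pair $(f,g)$, so no choice makes $\alpha$ nonvanish and $A$ is not separable. I expect no serious obstacle; the only point demanding care is the dimension count establishing that each relevant Hom space is one-dimensional over $k$. This is precisely where the hypothesis $\Hom_\CC(\one,A)\simeq k$ enters, and it is exactly what forces the scaling ambiguity in Corollary \ref{cor:div-sep} to collapse onto multiples of the single scalar $\dim A$.
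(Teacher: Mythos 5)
Your proposal is correct and follows exactly the route the paper takes: the paper's entire proof reads ``This is immediate from Corollary \ref{cor:div-sep},'' and your argument is precisely the routine verification behind that word ``immediate'' --- using $\Hom_\CC(\one,A)\simeq k$ to see that every admissible pair $(f,g)$ in Corollary \ref{cor:div-sep} yields $\alpha=\lambda\dim A$ for some $\lambda\in k^\times$. Nothing is missing; you have simply written out the details the authors left to the reader.
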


\begin{proof}
This is immediate from Corollary \ref{cor:div-sep}.
\end{proof}

\begin{exam}
In the special case $\CC=\bk$, the only division algebra $A$ satisfying $\Hom_\CC(\one,A)\simeq k$ is the trivial algebra $k$, and we have $\dim k=1$.
\end{exam}

\begin{defn}
Let $\CC$ be a homogeneous fusion category, regarded as a left $\CC\boxtimes\CC^\rev$-module. The {\em global dimension} of $\CC$, denoted as $\dim\CC$, is the dimension of the division algebra $[\one,\one]$ in the fusion category $\CC\boxtimes\CC^\rev$.
\end{defn}

\begin{rem}
Note that $[\one,\one]\simeq\bigoplus a^L\boxtimes a$ where the sum is taken over all simple objects of $\CC$. So, the global dimension defined above agrees with that in \cite{Mu1,ENO}.
\end{rem}

%The following corollary was proved for a homogeneous pivotal fusion category in \cite{BV}. The sufficiency was proved in \cite{Mu2,ENO}.

\begin{cor}[\cite{Mu2,ENO,BV,DSS}] \label{cor:fus-semi-dim}
Let $\CC$ be a homogeneous fusion category. Then $\FZ(\CC)$ is semisimple if and only if $\dim\CC\ne0$.
\end{cor}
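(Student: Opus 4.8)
The plan is to realize $\FZ(\CC)$ as a bimodule category over the division algebra $[\one,\one]$ whose dimension is $\dim\CC$, and then read off the chain of equivalences ``$\FZ(\CC)$ semisimple $\Leftrightarrow$ $[\one,\one]$ separable $\Leftrightarrow$ $\dim\CC\neq0$'' from the results already established.

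First I would regard $\CC$ as a left module over the fusion category $\CD:=\CC\boxtimes\CC^\rev$, via the two-sided action $(a\boxtimes b)\odot c=a\otimes c\otimes b$, exactly as in the definition of the global dimension. Since $\one\in\CC$ is simple, $\CC$ is an indecomposable semisimple left $\CD$-module and $\one$ is a nonzero object of it; hence Corollary \ref{cor:ind-mod} gives an equivalence $\RMod_B(\CD)\simeq\CC$ of left $\CD$-modules, where $B:=[\one,\one]$. Under this equivalence the free module $B$ corresponds to $\one$, so $B$ is a simple right $B$-module, i.e.\ a division algebra; and it is semisimple by Proposition \ref{prop:ind-mod}. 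The internal-hom adjunction computes $\Hom_\CD(\one,B)\simeq\Hom_\CC(\one\odot\one,\one)=\Hom_\CC(\one,\one)\simeq k$, using that $\CC$ is homogeneous with simple unit. Thus $B$ meets the hypotheses of Definition \ref{def:quan-dim}, and by definition $\dim\CC=\dim B$.

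Next I would apply Theorem \ref{thm:div-sep-dim} to $B$ inside $\CD$ (which is a fusion category over $k$, since $\one\boxtimes\one$ is simple and $\Hom_\CD(\one,\one)\simeq k$): this yields that $B$ is separable if and only if $\dim B=\dim\CC\neq0$. To match separability of $B$ with semisimplicity of $\FZ(\CC)$, I would invoke the standard identification of the Drinfeld center $\FZ(\CC)\simeq\Fun_\CD(\CC,\CC)$ as the category of $\CD$-module endofunctors of $\CC$, composed with the equivalence $\Fun_\CD(\CC,\CC)\simeq\BMod_{B|B}(\CD)$ arising from $\CC\simeq\RMod_B(\CD)$ (the very equivalence used in the proof of Proposition \ref{prop:sep}). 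Since $B$ is semisimple, Proposition \ref{prop:sep} then gives that $B$ is separable if and only if $\BMod_{B|B}(\CD)\simeq\FZ(\CC)$ is semisimple. Concatenating the two criteria proves the corollary.

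The step requiring the most care is the bookkeeping: the identification $\FZ(\CC)\simeq\Fun_\CD(\CC,\CC)$, the matching of $B$ with $\one$ under $\RMod_B(\CD)\simeq\CC$, and the consequent identity $\dim B=\dim\CC$. Once these are pinned down, the statement is a direct consequence of Theorem \ref{thm:div-sep-dim} and Proposition \ref{prop:sep}; the remaining verifications—indecomposability of $\CC$ as a $\CD$-module and the computation $\Hom_\CD(\one,B)\simeq k$—are routine given the homogeneity and simple-unit hypotheses.
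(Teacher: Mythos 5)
Your proof is correct and follows essentially the same route as the paper: realize $\CC\simeq\RMod_{[\one,\one]}(\CC\boxtimes\CC^\rev)$ via Corollary \ref{cor:ind-mod}, identify $\FZ(\CC)\simeq\Fun_{\CC\boxtimes\CC^\rev}(\CC,\CC)$ and apply Proposition \ref{prop:sep} to reduce semisimplicity of $\FZ(\CC)$ to separability of $[\one,\one]$, then conclude with Theorem \ref{thm:div-sep-dim}. The extra bookkeeping you supply (matching $B$ with $\one$, checking $\Hom_\CD(\one,B)\simeq k$) is exactly the routine verification the paper leaves implicit.
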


\begin{proof}
We have $\CC\simeq\RMod_{[\one,\one]}(\CC\boxtimes\CC^\rev)$ by Corollary \ref{cor:ind-mod}. Therefore, $\FZ(\CC)\simeq\Fun_{\CC\boxtimes\CC^\rev}(\CC,\CC)$ is semisimple if and only if $[\one,\one]$ is separable by Proposition \ref{prop:sep}. Then apply Theorem \ref{thm:div-sep-dim}.
\end{proof}

\begin{exam}
Finite-dimensional $\Z/p\Z$-graded vector spaces over a field $k$ of characteristic $p\ne0$ form a fusion category. It has a vanishing global dimension, thus its Drinfeld center is not semisimple.
\end{exam}

\begin{rem}
The semisimplicity of $\FZ(\CC)$ in Theorem \ref{thm:hom-semi-sep} is indispensable. For example, if $\CC$ is a homogeneous fusion category with vanishing global dimension, then the simple division algebra $[\one,\one]$ in $\CC\boxtimes\CC^\rev$ is not separable. % (however, its module category is semisimple).
\end{rem}

%\begin{rem}
%In the special case where $\CC$ is a homogeneous fusion category, Theorem \ref{thm:hom-semi-sep} could also be proved by using dimension argument. To see this, let $\CM$ be a homogeneous indecomposable semisimple left $\CC$-module. Define the {\em squared norm} of a simple object $x\in\CM$ to be the quantum dimension of the division algebra $[x,x]$ in $\CC$, and defined the {\em global dimension} $\dim\CM$ of $\CM$ to be the sum of squared norms of its simple objects. One can show that $\dim\CM=\dim\CC$. Hence Theorem \ref{thm:hom-semi-sep} is a consequence of Theorem \ref{thm:div-sep-dim} and Corollary \ref{cor:fus-semi-dim}.
%\end{rem}

\begin{thm} \label{thm:zero-sep}
Let $\CC$ be a multi-fusion category over a field of characteristic zero. Then all semisimple algebras in $\CC$ are separable.
\end{thm}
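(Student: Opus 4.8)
The plan is to reduce the statement to the separability of a division algebra, and then show that in characteristic zero the dimension of any such division algebra is automatically nonzero, so that Theorem \ref{thm:div-sep-dim} applies. By Theorem \ref{thm:separable}, a semisimple algebra is a direct sum of matrix algebras, and a matrix algebra is separable precisely when its corner division algebra $A_{11}$ is separable. Hence the whole theorem collapses to the following assertion: over a field of characteristic zero, every division algebra in a multi-fusion category is separable. First I would reduce to the case of a division algebra $A$ in a \emph{fusion} category $\CC$ with $\Hom_\CC(\one,A)\simeq k$, so that $\dim A$ is defined. The reduction to a fusion category uses Remark \ref{rem:mf-decomp} (a division algebra, being simple, lives in a single block $\CC_{ij}$, and its endomorphism structure is governed by a fusion category), and the reduction to $\Hom_\CC(\one,A)\simeq k$ is achieved by passing to the division algebra $\Hom_\CC(\one,A)$-linearly and applying a separable base extension, which is harmless for separability by Corollary \ref{cor:sep-ext} and which is always \emph{separable} in characteristic zero since every field is perfect.

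\textbf{The crux} is then to prove $\dim A\neq 0$. The cleanest route I would take is to relate $\dim A$ to an ordinary dimension count that cannot vanish in characteristic zero. Recall from Proposition \ref{prop:div} that $A\simeq A^L$ as right $A$-modules; fixing such an isomorphism $f:A\to A^L$, the scalar $\dim A$ is the composite $\one\to A\otimes A^L\xrightarrow{f\otimes f^{-1}}A^L\otimes A\to\one$. The key observation is that this composite computes a categorical trace: applying $\Hom_\CC(\one,-)$ turns the half-braiding/duality data into the ordinary trace of an endomorphism on the finite-dimensional $k$-vector space $\Hom_\CC(\one,A\otimes A^L)$, and under the hypotheses the relevant endomorphism is (a scalar multiple of) the identity on a space whose dimension is the composition length of $A$ as an object of $\CC$. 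In characteristic zero a nonzero-rank identity-like map has nonzero trace, forcing $\dim A\neq 0$. I would make this precise by exhibiting $\dim A$ as $\ell\cdot 1_k$ (or a positive-integer multiple thereof coming from the semisimple decomposition of $A\otimes A^L$), where $\ell\geq 1$ since $A\neq 0$; the integer $\ell$ is visibly nonzero in $k$ exactly because $\mathrm{char}\,k=0$.

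\textbf{The main obstacle} I anticipate is controlling the sign/scaling ambiguity in the pairing defining $\dim A$ and verifying that it genuinely reduces to a \emph{nonnegative integer} rather than to an integer that could vanish modulo subtleties of the half-braiding. In positive characteristic this integer can be killed (as the $\Z/p\Z$-graded example shows, where $\dim\CC=p\cdot 1_k=0$), so the argument must isolate precisely the place where characteristic zero is used: namely that the underlying count is a \emph{rank} of an evaluation pairing on finite-dimensional vector spaces, and such a rank is a strictly positive integer whenever the objects involved are nonzero. Once $\dim A\neq 0$ is established, Theorem \ref{thm:div-sep-dim} gives separability of $A$, and unwinding the reductions via Theorem \ref{thm:separable} yields separability of every semisimple algebra in $\CC$. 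I would close by noting that this recovers Corollary \ref{cor:cen-per-sep} in the characteristic-zero case without needing $\FZ(\CC)$ semisimple as an input, since Corollary \ref{cor:fus-semi-dim} then shows semisimplicity of the center is automatic.
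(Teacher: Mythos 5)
Your skeleton (reduce via Theorem \ref{thm:separable} to corner division algebras, then invoke Theorem \ref{thm:div-sep-dim}) is reasonable, but the central step fails: the claim that $\dim A\ne 0$ because $\dim A$ is ``a rank of an evaluation pairing,'' i.e.\ a positive integer such as the composition length of $A$, is false. The scalar $\dim A$ is a categorical trace, not the trace of an identity map on a finite-dimensional $\Hom$-space, and it is not an integer in general. Concretely, let $\CC$ be the Fibonacci fusion category over $\C$ and $A=[\tau,\tau]\simeq\tau\otimes\tau^L$ for the nontrivial simple object $\tau$: then $A$ is a division algebra with $\Hom_\CC(\one,A)\simeq\C$, its composition length in $\CC$ is $2$ (since $A\simeq\one\oplus\tau$), yet $\dim A=|\tau|^2=(3+\sqrt5)/2$. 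So the identification of $\dim A$ with $\ell\cdot 1_k$ for a positive integer $\ell$ is wrong, and with it your proof that $\dim A\ne0$. The nonvanishing of such categorical dimensions in characteristic zero is exactly the hard content here (it is ENO's Theorem 2.3 on squared norms, proved by algebraic-integrality and positivity arguments), and your rank count does not supply it; indeed the same count would make sense in characteristic $p$, where these dimensions do vanish, and typically not because an integer becomes divisible by $p$ but because they were never integers to begin with.

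The paper's proof avoids this entirely: it cites ENO Theorem 2.3 only for the single division algebra $[\one,\one]$ in $\CC\boxtimes\CC^\rev$, i.e.\ for the statement $\dim\CC\ge1$, deduces from Corollary \ref{cor:fus-semi-dim} that $\FZ(\CC)$ is semisimple, and then repeats the argument of Corollary \ref{cor:cen-per-sep}: after a separable base extension one may assume everything is homogeneous, and Theorem \ref{thm:hom-semi-sep} (the coend $W=\int^{a}a\otimes a^R$ with its half-braiding, whose augmentation $W\to\one$ splits in the semisimple center) yields separability of all semisimple algebras simultaneously, with no need to verify $\dim A\ne0$ division algebra by division algebra. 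If you want to keep your route, you would have to import the full positivity theorem for dimensions of internal-hom algebras in module categories as a black box rather than derive it from a dimension count.
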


\begin{proof}
According to \cite[Theorem 2.3]{ENO}, if $\CC$ is a homogeneous fusion category then $\dim\CC\ge1$, consequently $\FZ(\CC)$ is semisimple by Corollary \ref{cor:fus-semi-dim}. Moreover, $\FZ(\CC)$ is also semisimple if $\CC$ is a homogeneous multi-fusion category by Remark \ref{rem:mf-decomp}. The remaining proof is parallel to that of Corollary \ref{cor:cen-per-sep}.
\end{proof}

The following corollary generalizes \cite[Theorem 2.18]{ENO} from algebraically closed fields to perfect fields.

\begin{cor} \label{cor:fun-semi}
Let $\CC$ be a multi-fusion category over a perfect field $k$. Suppose that $\FZ(\CC)$ is semisimple or $k$ is of characteristic zero. Then $\Fun_\CC(\CM,\CN)$ is semisimple for semisimple left $\CC$-modules $\CM,\CN$.
\end{cor}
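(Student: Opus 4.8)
The plan is to reduce everything to the separability results already established in Sections 4 and 5, together with the functor-category characterization of separability in Proposition \ref{prop:sep}. First I would realize the source module categorically: since $\CM$ is a finite semisimple left $\CC$-module and $\CC$ is rigid, the reconstruction statement recalled just after Definition \ref{def:finite-module} gives an equivalence $\CM \simeq \RMod_A(\CC)$ of left $\CC$-modules for some algebra $A$ in $\CC$. Because $\CM$ is semisimple, $A$ is a semisimple algebra by the very definition of semisimplicity of an algebra.

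Next I would feed the hypotheses into the two separability theorems to conclude that $A$ is in fact separable. If $\FZ(\CC)$ is semisimple and $k$ is perfect, this is exactly the content of Corollary \ref{cor:cen-per-sep}, which asserts that every semisimple algebra in such a $\CC$ is separable. If instead $k$ has characteristic zero, then Theorem \ref{thm:zero-sep} guarantees that all semisimple algebras in $\CC$ are separable; note that a characteristic-zero field is automatically perfect, so this case is consistent with the standing hypothesis that $k$ is perfect. In either case we obtain that $A$ is a separable algebra.

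Finally I would transport this separability back to the functor categories. By Proposition \ref{prop:sep}, applied to the semisimple algebra $A$ with $\CM = \RMod_A(\CC)$, the implication $(1)\Rightarrow(3)$ states precisely that when $A$ is separable, the category $\Fun_\CC(\CM,\CN)$ is semisimple for every semisimple left $\CC$-module $\CN$. This yields the desired conclusion.

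The proof is thus essentially an assembly of earlier results, and I do not expect a genuine obstacle: the real analytic work lives in Corollary \ref{cor:cen-per-sep} and Theorem \ref{thm:zero-sep} (which in turn depend on the separability criteria of Section 4 and the semisimplicity of $\FZ(\CC)$). The only step deserving a moment's attention is the first one, namely confirming that an abstract semisimple module category is presented as $\RMod_A(\CC)$ with $A$ semisimple; but this follows immediately from the rigidity of $\CC$ and the definition of a semisimple algebra, so no difficulty arises.
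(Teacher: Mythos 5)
Your proposal is correct and follows exactly the paper's own (very terse) proof, which simply combines Proposition \ref{prop:sep} with Corollary \ref{cor:cen-per-sep} and Theorem \ref{thm:zero-sep}; you have merely filled in the routine step of writing $\CM\simeq\RMod_A(\CC)$ with $A$ semisimple before invoking the separability results and the implication $(1)\Rightarrow(3)$ of Proposition \ref{prop:sep}. No issues.
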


\begin{proof}
Combine Proposition \ref{prop:sep} and Corollary \ref{cor:cen-per-sep}, Theorem \ref{thm:zero-sep}.
\end{proof}

%Combining Theorem \ref{thm:div-sep-dim} with Theorem \ref{thm:hom-semi-sep} and Theorem \ref{thm:zero-sep}, we obtain

\begin{thm}
Let $\CC$ be a fusion category over an algebraically closed field $k$. Suppose $\FZ(\CC)$ is semisimple or $k$ is of characteristic zero. The following conditions for a division algebra $A$ in $\CC$ are equivalent:
\begin{enumerate}
\item
$A$ is simple.
\item
$A$ is separable.
\item
$\dim A\ne0$.
\end{enumerate}
\end{thm}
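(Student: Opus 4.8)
The plan is to reduce the theorem to results already established, after first noting that the standing algebraic closedness makes the hypotheses of the dimension theory automatic. Concretely, since $A$ is a division algebra, $A$ is a simple object of the finite category $\RMod_A(\CC)$, so by Schur's lemma and Remark \ref{rem:division} the algebra $\Hom_\CC(\one,A) \simeq \Hom_{\RMod_A(\CC)}(A,A)$ is a finite-dimensional division algebra over $k$; as $k$ is algebraically closed, this forces $\Hom_\CC(\one,A) \simeq k$. Hence the hypothesis of Definition \ref{def:quan-dim} and of Theorem \ref{thm:div-sep-dim} is satisfied and $\dim A$ is well defined. Once this is in place, the equivalence $(2)\Leftrightarrow(3)$ is nothing but Theorem \ref{thm:div-sep-dim}.

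For $(1)\Rightarrow(2)$ I would argue as follows: a simple algebra is in particular semisimple. Since $k$ is algebraically closed it is perfect, so if $\FZ(\CC)$ is semisimple then Corollary \ref{cor:cen-per-sep} shows that every semisimple algebra in $\CC$ is separable, and in particular $A$ is; if instead $k$ has characteristic zero, Theorem \ref{thm:zero-sep} yields the same conclusion. This is the only step that invokes the standing hypothesis on $\FZ(\CC)$ or on the characteristic, and it is where the real content of the statement lies.

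For $(2)\Rightarrow(1)$ I would observe that a separable algebra is semisimple by Corollary \ref{cor:sep-semi}, so $\RMod_A(\CC)$ is semisimple. Because $A$ is a division algebra, the decomposition of $A$ as a right $A$-module into simple summands reduces to the single summand $A$ itself; thus in the notation of Lemma \ref{lem:simple-matr} the relation $\sim$ has a single equivalence class, and part $(3)$ of that lemma shows $\RMod_A(\CC)$ is an indecomposable semisimple left $\CC$-module. Therefore $A$ is a simple algebra. Together with $(1)\Rightarrow(2)$ and $(2)\Leftrightarrow(3)$ this closes the equivalence.

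The step I expect to demand the most care is the opening reduction, namely verifying cleanly that $\Hom_\CC(\one,A)\simeq k$ and confirming that this is precisely the input required by Theorem \ref{thm:div-sep-dim}; the remaining implications are bookkeeping over earlier results. It is worth emphasizing that the hypothesis on $\FZ(\CC)$ (or on the characteristic) is genuinely needed only in $(1)\Rightarrow(2)$ and cannot be dropped: for a homogeneous fusion category of vanishing global dimension, the simple division algebra $[\one,\one]$ in $\CC\boxtimes\CC^\rev$ is \emph{not} separable, so a simple division algebra need not satisfy $(2)$ or $(3)$ in general.
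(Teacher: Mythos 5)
Your proof is correct and follows essentially the same route as the paper: $(1)\Rightarrow(2)$ via the semisimple-center/characteristic-zero separability results, $(2)\Rightarrow(1)$ via Corollary \ref{cor:sep-semi}, and $(2)\Leftrightarrow(3)$ via Theorem \ref{thm:div-sep-dim}. You additionally make explicit two points the paper leaves implicit — that algebraic closedness forces $\Hom_\CC(\one,A)\simeq k$ (which is genuinely required to invoke Definition \ref{def:quan-dim} and Theorem \ref{thm:div-sep-dim}), and that a semisimple division algebra is automatically simple via the single-equivalence-class argument of Lemma \ref{lem:simple-matr} — both of which are accurate and welcome.
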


\begin{proof}
$(1)\Rightarrow(2)$ is due to Theorem \ref{thm:hom-semi-sep} and Theorem \ref{thm:zero-sep}. 
$(2)\Rightarrow(1)$ is due to Corollary \ref{cor:sep-semi}. $(2)\Leftrightarrow(3)$ is due to Theorem \ref{thm:div-sep-dim}. 
\end{proof}

\end{document}